\renewcommand{\tocsection}[3]{%
  \indentlabel{\@ifnotempty{#2}{\bfseries\ignorespaces#1 #2\quad}}\bfseries#3}
\renewcommand{\tocsubsection}[3]{%
  \indentlabel{\@ifnotempty{#2}{\ignorespaces#1 #2\quad}}#3}
\newcommand\@dotsep{4.5}
\def\@tocline#1#2#3#4#5#6#7{\relax
  \ifnum #1>\c@tocdepth 
  \else
    \par \addpenalty\@secpenalty\addvspace{#2}%
    \begingroup \hyphenpenalty\@M
    \@ifempty{#4}{%
      \@tempdima\csname r@tocindent\number#1\endcsname\relax
    }{%
      \@tempdima#4\relax
    }%
    \parindent\z@ \leftskip#3\relax \advance\leftskip\@tempdima\relax
    \rightskip\@pnumwidth plus1em \parfillskip-\@pnumwidth
    #5\leavevmode\hskip-\@tempdima{#6}\nobreak
    \leaders\hbox{$\m@th\mkern \@dotsep mu\hbox{.}\mkern \@dotsep mu$}\hfill
    \nobreak
    \hbox to\@pnumwidth{\@tocpagenum{\ifnum#1=1\bfseries\fi#7}}\par
    \nobreak
    \endgroup
  \fi}
\renewcommand\csname r@tocindent0\endcsname{0pt}
\def\l@subsection{\@tocline{2}{0pt}{2.5pc}{5pc}{}}
\definecolor{Red}{rgb}{1 0 0}
\definecolor{Green}{rgb}{0 1 0}
\definecolor{Blue}{rgb}{0 0 1}
\newcommand{\application}[5]{\begin{array}{lrcl}
#1: & #2 & \longrightarrow & #3 \\
    & #4 & \longmapsto & #5 \end{array}}
\newcommand{\tr}[0]{\mathrm{tr}}
\newcommand{\Sp}[0]{\mathrm{Sp}}
\newcommand{\N}[0]{\mathbb{N}}
\newcommand{\R}[0]{\mathbb{R}}
\newcommand{\1}[0]{\mathbf{1}}
\newcommand{\Span}[0]{\operatorname{Span}}
\newcommand{\Hess}[0]{\operatorname{Hess}}
\newcommand{\rank}[0]{\operatorname{rank}}
\newcommand{\DD}{\Delta\!\!\!\!\Delta}
\def\moverlay{\mathpalette\mov@rlay}
\def\mov@rlay#1#2{\leavevmode\vtop{%
   \base\Spaneskip\z@skip \\Spaneskiplimit-\maxdimen
   \ialign{\hfil$\m@th#1##$\hfil\cr#2\crcr}}}
\newcommand{\charfusion}[3][\mathord]{
    #1{\ifx#1\mathop\vphantom{#2}\fi
        \mathpalette\mov@rlay{#2\cr#3}
      }
    \ifx#1\mathop\expandafter\displaylimits\fi}
\newtheorem{theorem}{Theorem}[section]
\newtheorem{proposition}[theorem]{Proposition}
\newtheorem{corollary}[theorem]{Corollary}
\newtheorem{lemma}[theorem]{Lemma}
\newtheorem{definition}[theorem]{Definition}
\newtheorem{remark}[theorem]{Remark}
\newtheorem{example}[theorem]{Example}
\title{Isometric embedding of the $n$-point spaces into the space of spaces for $n \leq 4$}
\author{Benjamin Capdeville}
\begin{document}
\def\smfbyname{}

\maketitle

\begin{abstract}
In \cite{sturm2023space}, Sturm studied the space of all metric measure spaces up to isomorphism which he called \textit{The space of spaces}. He also introduced for $n \in \N^{*}$ the space of all $n$-points metric spaces. The aim of this article is to study if the embedding of this space in the space of spaces is isometric. Using results from \cite{maron2018probably} and \cite{maehara2013euclidean}, we prove that it is the case for $n\leq4$ and for Euclidean metric spaces.

\end{abstract}

\tableofcontents

\section{Introduction}

In his paper \textit{The space of spaces} \cite{sturm2023space}, Sturm introduces two distances for the space of metric spaces with $n\in \N^*$ points (up to isomorphism). The difference between these two distances is that the first one relies on transport maps whereas the second relies on transport plans. The goal of this article is to study if these two distances are the same.
In Section \ref{space of spaces}, we recall the framework of \textit{the space of spaces} and of \textit{the n-point spaces}, leading us to state our problem which turns out to be a quadratic optimization problem.
In Section \ref{proof}, following \cite{maron2018probably}, we find a sufficient condition for our problem, based on a convexity argument. This leads us to introduce the notion of metric spaces of negative type. In Section \ref{negative type}, following \cite{maehara2013euclidean}, we show that Euclidean spaces (Corollary \ref{eucl}) and metric spaces with $n\leq4$ points (Theorem \ref{four}) are of negative type, and that they indeed verify the identity we want to prove.

\section{Acknowledgements}

This article is part of my work during my internship at the University of Bonn, in Professor Sturm's team, as a part of my first year of master at ENS Paris-Saclay and at Paris-Saclay University. I am grateful to  Professor Sturm for welcoming and guiding me through this interniship.

\section{The space of spaces, the n-point spaces}\label{space of spaces}

In this part we recall the framework of \textit{the space of spaces} introduced by Sturm in \cite{sturm2023space}.

\subsection{The space of spaces}

\begin{definition}
A metric space measure space is $\mathbb{X} = (X,d,m)$ where $(X,d)$ is a complete separable metric space and $m$ is a Borel probability measure on $X$. For the sake of simplicity we suppose full support of the measure.
\end{definition}

\begin{definition}
Two metric measure spaces $\mathbb{X} = (X,d_X,m_X)$ and $\mathbb{Y} = (Y, d_Y,m_Y)$ are isomorphic when there exists a Borel measurable bijection $\phi : X \to Y$ with Borel measurable inverse such that $\phi_* m_X = m_Y$ and $d_X = \phi^* d_Y$.
\end{definition}

\begin{definition}
For $p \in [1, +\infty]$, and two metric measure spaces $\mathbb{X} = (X, d_x, m_X)$ and $\mathbb{Y} = (Y, d_Y, m_Y)$, we define their $L^p$-distortion distance :
$$\DD_p(\mathbb{X},\mathbb{Y}) = \left( \inf_{\pi \in Cpl(m_X, m_Y)} \int_{X \times Y} \int_{X \times Y} | d_X(x,x') - d_Y(y,y') |^p d\pi(x,y) d\pi(x',y') \right)^{\frac{1}{p}}.$$
\end{definition}

It is a pseudo-distance between metric measure spaces.
It is also known as the Gromov-Wasserstein distance \cite{memoli2011gw}.

$\DD_p$ is only a pseudo-distance, and the following proposition provides a characterization for two metric measure spaces whose $\DD_p$ distance is equal to 0.

\begin{proposition}
$\DD_p(\mathbb{X},\mathbb{Y}) = 0$ if and only if $\mathbb{X}$ and $\mathbb{Y}$ are isomorphic.
\end{proposition}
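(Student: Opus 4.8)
The plan is to prove the two implications of the equivalence separately. The implication ``isomorphic $\Rightarrow\ \DD_p=0$'' is immediate: if $\phi\colon X\to Y$ realises an isomorphism, I would take the coupling $\pi:=(\mathrm{id}_X,\phi)_*m_X$ concentrated on the graph of $\phi$; since $\phi_*m_X=m_Y$ it lies in $Cpl(m_X,m_Y)$, and since $d_X=\phi^*d_Y$ the integrand $|d_X(x,x')-d_Y(y,y')|^p$ vanishes $\pi\otimes\pi$-almost everywhere, so $\DD_p(\mathbb{X},\mathbb{Y})=0$.

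For the converse, assume $\DD_p(\mathbb{X},\mathbb{Y})=0$ and suppose first that $p<\infty$. The first step is to show the infimum is attained. The set $Cpl(m_X,m_Y)$ is weakly compact by Prokhorov's theorem, since all its members have the fixed tight marginals $m_X,m_Y$; the map $\pi\mapsto\pi\otimes\pi$ is weakly continuous; and integrating the nonnegative continuous function $|d_X(x,x')-d_Y(y,y')|^p$ against $\pi\otimes\pi$ is therefore a weakly lower semicontinuous functional of $\pi$. Hence there is an optimal coupling $\pi^*$ of cost $0$, i.e.\ $d_X(x,x')=d_Y(y,y')$ for $(\pi^*\otimes\pi^*)$-almost every $((x,y),(x',y'))$. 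The second step upgrades this pointwise identity: the set of quadruples satisfying it is closed, hence contains the support of $\pi^*\otimes\pi^*$, which is $R\times R$ with $R:=\operatorname{supp}\pi^*$; so $d_X(x,x')=d_Y(y,y')$ for \emph{all} $(x,y),(x',y')\in R$.

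The third step extracts the isometry. Taking $x=x'$ above forces $d_Y(y,y')=0$, so two points of $R$ sharing a first coordinate coincide, and symmetrically for the second coordinate; thus $R$ is the graph of a distance-preserving bijection $\phi_0$ between the projections $R_X:=\operatorname{pr}_X(R)$ and $R_Y:=\operatorname{pr}_Y(R)$. Since $m_X=(\operatorname{pr}_X)_*\pi^*$ has full support, $R_X$ is dense in $X$, and likewise $R_Y$ is dense in $Y$. As $\phi_0$ is $1$-Lipschitz, it extends by uniform continuity to an isometric embedding $\phi\colon X\to Y$; the image $\phi(X)$ is complete, hence closed, and contains the dense set $R_Y$, hence equals $Y$, so $\phi$ is a bijective isometry --- in particular a homeomorphism, Borel measurable with Borel measurable inverse, and $d_X=\phi^*d_Y$. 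Finally $\pi^*$ is concentrated on the graph of $\phi$ and has first marginal $m_X$, so $\pi^*=(\mathrm{id}_X,\phi)_*m_X$; taking the second marginal gives $\phi_*m_X=m_Y$. This is precisely the statement that $\mathbb{X}$ and $\mathbb{Y}$ are isomorphic.

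For $p=\infty$ I would either run the same argument with the cost $\pi\mapsto\sup\{\,|d_X(x,x')-d_Y(y,y')| : (x,y),(x',y')\in\operatorname{supp}\pi\,\}$, which is again weakly lower semicontinuous on the compact set of couplings, or invoke $\DD_\infty=\lim_{p\to\infty}\DD_p$ with monotonicity in $p$. I expect the genuine difficulty to lie entirely in the soft-analysis part of the converse: the weak compactness and lower semicontinuity needed to obtain an \emph{optimal} coupling, and the passage from an almost-everywhere identity to one holding on the whole support. Both are standard, and they become trivial in the $n$-point setting used in the rest of the paper, where $Cpl(m_X,m_Y)$ is a compact polytope and every measure is finitely supported.
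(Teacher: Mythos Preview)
The paper does not prove this proposition at all: it is stated without proof in Section~\ref{space of spaces} as part of the background material recalled from Sturm's memoir \cite{sturm2023space} (see also M\'emoli \cite{memoli2011gw}). There is therefore no ``paper's own proof'' to compare against.

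That said, your argument is correct and is essentially the standard one found in those references. The key ingredients --- existence of an optimal coupling via tightness and lower semicontinuity, upgrading the almost-everywhere equality of distances to an equality on the full support, and reading off a densely defined isometry from the support which then extends by completeness --- are exactly the expected ones. The full-support assumption on $m_X$ and $m_Y$, which the paper makes explicitly, is what lets you pass from ``isometry on the projections of $\operatorname{supp}\pi^*$'' to a global isometry, and you use it correctly. The $p=\infty$ sketch is adequate; either of the two routes you indicate works. One cosmetic remark: when you say ``$\pi^*$ is concentrated on the graph of $\phi$'', you are using that a Borel probability on a Polish space is supported on its topological support, which is true but worth a half-line if you want the argument to be self-contained.
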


Being isomorphic is an equivalence relation so we consider metric measures spaces up to isomorphism.

\begin{definition}
The family of all isomorphism classes of metric measure spaces (with complete separable
metric and normalized volume, as usual) will be denoted by $\mathbb{X}_0$.
We will denote an equivalence class of mm-spaces by $\mathcal{X} = [X, d_X, m_X]$.

We also consider for $p \in [1, +\infty]$ mm-spaces with finite $L^p$ size :

For $\mathcal{X} = [X,d,m]$ a mm-space, let's denote $size_p(\mathcal{X}) = \left( \int_X \int_X d(x,y) dm(x) dm(y) \right)^{\frac{1}{p}}$
$$\mathbb{X}_p = \lbrace \mathcal{X} \in \mathbb{X}_0 : size_p(\mathcal{X}) < + \infty \rbrace.$$
\end{definition}

\begin{proposition}
$(\mathbb{X}_p, \DD_p)$ is a length metric space. It is separable but not complete.
\end{proposition}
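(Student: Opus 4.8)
The statement packages four claims---that $\DD_p$ is a finite real-valued metric on $\mathbb X_p$, that $(\mathbb X_p,\DD_p)$ is a length space, that it is separable, and that it is not complete---and I would treat them in that order. The metric axioms are essentially bookkeeping: $\DD_p$ is well defined on isomorphism classes because an isomorphism transports couplings bijectively and fixes the integrand $|d_X(x,x')-d_Y(y,y')|$; symmetry follows by pushing a coupling forward under the flip $X\times Y\to Y\times X$; the equivalence $\DD_p(\mathcal X,\mathcal Y)=0\iff\mathcal X=\mathcal Y$ is the preceding proposition; finiteness on $\mathbb X_p$ comes from $|d_X(x,x')-d_Y(y,y')|\le d_X(x,x')+d_Y(y,y')$, Minkowski's inequality in $L^p$ of the product measure $\pi\otimes\pi$, and the finite $L^p$-size of $\mathcal X$ and $\mathcal Y$; and the triangle inequality I would get from the gluing lemma---disintegrate $\pi_{12}\in Cpl(m_X,m_Y)$ and $\pi_{23}\in Cpl(m_Y,m_Z)$ along $Y$, glue to $\pi_{123}$ on $X\times Y\times Z$, take its $(X,Z)$-marginal, and combine the pointwise inequality for $|\cdot|$ with Minkowski in $L^p\bigl((X\times Y\times Z)^2,\pi_{123}\otimes\pi_{123}\bigr)$ before passing to infima. (The case $p=\infty$ is identical with essential suprema in place of $L^p$-norms.)

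To prove the length property I would use a coupling interpolation. Fix $\mathcal X=[X,d_X,m_X]$, $\mathcal Y=[Y,d_Y,m_Y]$, $\varepsilon>0$, choose $\pi\in Cpl(m_X,m_Y)$ with $\iint|d_X-d_Y|^p\,d\pi\,d\pi\le(\DD_p(\mathcal X,\mathcal Y)+\varepsilon)^p$, and on $Z:=X\times Y$ (with $\mathrm{pr}_1,\mathrm{pr}_2$ the projections) set $d_t:=(1-t)\,\mathrm{pr}_1^{*}d_X+t\,\mathrm{pr}_2^{*}d_Y$ and $\mathcal Z_t:=[Z,d_t,\pi]$. The coupling of $\pi$ and $m_X$ carried by the graph of $\mathrm{pr}_1$ shows $\DD_p(\mathcal Z_0,\mathcal X)=0$, hence $\mathcal Z_0=\mathcal X$, and symmetrically $\mathcal Z_1=\mathcal Y$. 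Since
\[
|d_s-d_t|=|t-s|\,\bigl|\mathrm{pr}_1^{*}d_X-\mathrm{pr}_2^{*}d_Y\bigr|\qquad\text{pointwise on }Z\times Z,
\]
the diagonal self-coupling of $\pi$ gives $\DD_p(\mathcal Z_s,\mathcal Z_t)\le|t-s|\,(\DD_p(\mathcal X,\mathcal Y)+\varepsilon)$ for all $s,t\in[0,1]$, so the curve $t\mapsto\mathcal Z_t$ joins $\mathcal X$ to $\mathcal Y$ with length $\le\DD_p(\mathcal X,\mathcal Y)+\varepsilon$; as every joining curve has length $\ge\DD_p(\mathcal X,\mathcal Y)$ and $\varepsilon$ is arbitrary, $(\mathbb X_p,\DD_p)$ is a length space. (A tightness argument producing an optimal $\pi$ would upgrade this curve to a genuine geodesic, which the statement does not require.)

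For separability I would show that the countable family $\mathcal F$ of isomorphism classes of finite metric spaces with rational pairwise distances and rational masses is $\DD_p$-dense. Given $\mathcal X=[X,d,m]\in\mathbb X_p$, first approximate $m$ in the $L^p$-Wasserstein metric of $(X,d)$ by a finitely supported $\mu$ (possible thanks to the finite $L^p$-size); the bound $\DD_p([X,d,m],[X,d,\mu])\le 2\,W_p(m,\mu)$, obtained exactly as in the finiteness step from $|d(x,x')-d(y,y')|\le d(x,y)+d(x',y')$ with a $W_p$-optimal plan, makes this small. Then perturb the finitely many masses and distances of $[X,d,\mu]$ to nearby rationals---legitimate while keeping all triangle inequalities, since rational finite metrics are dense in the polyhedral metric cone---moving $\DD_p$ by at most a prescribed amount. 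Hence $\mathcal F$ is dense and $(\mathbb X_p,\DD_p)$ is separable.

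The remaining point, and the only one requiring an actual idea, is non-completeness: I would exhibit a $\DD_p$-Cauchy sequence in $\mathbb X_p$ whose only possible limit lies outside $\mathbb X_p$. Take $\mathcal X_n:=\bigl[\{1,\tfrac12,\dots,\tfrac1n\},|\cdot|,m_n\bigr]$ with $m_n$ the normalization of the weights $2^{-k}$ at $1/k$ (leftover mass placed at $1/n$), and compare it to $\mathcal X_\infty:=\bigl[\{1/k:k\ge1\},|\cdot|,m_\infty\bigr]$, $m_\infty(\{1/k\})=2^{-k}$: the obvious coupling (identity on the shared points, the tail mass of order $2^{-n}$ transported among points at distance $\le1$) gives $\DD_p(\mathcal X_n,\mathcal X_\infty)\to0$, so $(\mathcal X_n)$ is Cauchy. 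But $\mathcal X_\infty\notin\mathbb X_p$: the metric space $\{1/k:k\ge1\}$ is not complete, and its completion $\{1/k:k\ge1\}\cup\{0\}$ violates the standing full-support hypothesis because $m_\infty(\{0\})=0$; since an isomorphism preserves completeness and full support, no representative of $\mathcal X_\infty$ is admissible. If $\mathcal Y\in\mathbb X_p$ were the limit, then $\DD_p(\mathcal Y,\mathcal X_\infty)\le\DD_p(\mathcal Y,\mathcal X_n)+\DD_p(\mathcal X_n,\mathcal X_\infty)\to0$, forcing $\mathcal Y$ to be isomorphic to $\mathcal X_\infty$ by the preceding proposition---a contradiction. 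The delicate aspect here is genuinely the choice of example: many natural-looking Cauchy sequences (empirical measures tending to a diffuse limit, sequences of discrete metrics, and so on) do converge within $\mathbb X_p$, so the construction must force the ``missing'' point into the metric completion while assigning it zero mass.
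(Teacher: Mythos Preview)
The paper does not prove this proposition; it is stated as background from Sturm's monograph, so there is no in-paper argument to compare against. Your treatments of the metric axioms, the length-space property (via convex interpolation of the pulled-back metrics on $X\times Y$ along an almost-optimal coupling), and separability (via finite spaces with rational data) are standard and correct, and they match the arguments one finds in Sturm's work.

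Your non-completeness argument, however, has a genuine gap. You claim that $\{1/k:k\ge1\}\cup\{0\}$ equipped with $m_\infty$ violates the full-support hypothesis because $m_\infty(\{0\})=0$. But support is a topological notion: $x\in\operatorname{supp}(m)$ means every open neighbourhood of $x$ has positive measure, and here every ball around $0$ contains infinitely many atoms $1/k$ of positive mass. Hence $0\in\operatorname{supp}(m_\infty)$, the measure \emph{does} have full support on the completed space, and $\mathcal X_\infty=\bigl[\{1/k:k\ge1\}\cup\{0\},\,|\cdot|,\,m_\infty\bigr]$ is a perfectly legitimate element of $\mathbb X_p$ (complete, separable, fully supported, bounded hence of finite $L^p$-size). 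Your Cauchy sequence therefore converges inside $\mathbb X_p$ and establishes nothing. The slip is the conflation of ``$m(\{x\})=0$'' with ``$x\notin\operatorname{supp}(m)$'', which are equivalent only at isolated points.

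The actual obstruction to completeness---as the very next proposition in the paper indicates---is that $\DD_p$-limits can be \emph{pseudo}-metric measure spaces not representable by any admissible mm-space; building an explicit Cauchy sequence witnessing this requires a different and more delicate construction than the one you attempted, and you should consult Sturm's monograph for it rather than try to manufacture a failure of full support.
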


\begin{proposition}
The metric completion of $(\mathbb{X}_p, \DD_p)$ is the space of all pseudo metric measure spaces up to isomorphism (still denoted the same).
\end{proposition}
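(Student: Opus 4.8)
The plan is to identify the abstract metric completion of $(\mathbb{X}_p,\DD_p)$ with the concrete space $\mathbb{Y}_p$ of isomorphism classes of \emph{pseudo} metric measure spaces of finite $L^p$ size, by the usual three steps: realise $\mathbb{X}_p$ as a dense subspace of $\mathbb{Y}_p$ via an isometric embedding, prove that $(\mathbb{Y}_p,\DD_p)$ is complete, and conclude by uniqueness of metric completions. By a pseudo metric measure space I mean $(X,d,m)$ with $d$ a pseudometric on $X$ (symmetric, $d(x,x)=0$, triangle inequality, but possibly $d(x,y)=0$ for $x\neq y$) and $m$ a Borel probability measure with $\int_X\int_X d(x,y)^p\,dm(x)\,dm(y)<\infty$, two such being declared isomorphic exactly when their $\DD_p$-distance is $0$; the same gluing argument that makes $\DD_p$ a pseudo-distance on mm-spaces makes it one on pseudo mm-spaces, so after this quotient $\DD_p$ is a genuine metric on $\mathbb{Y}_p$. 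Since every metric is a pseudometric and the defining formula for $\DD_p$ uses only $d$ and $m$, the inclusion $\mathbb{X}_p\subset\mathbb{Y}_p$ is an isometric map, and it descends to isomorphism classes because, on $\mathbb{X}_p$, being isomorphic is equivalent to $\DD_p=0$ by the characterisation recalled above. It then remains to establish density and completeness.

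I would first prove density, i.e.\ that finitely supported mm-spaces are $\DD_p$-dense in $\mathbb{Y}_p$. Fix $(X,d,m)$ of finite $L^p$ size and $\varepsilon>0$; using $\int_X\int_X d^p\,dm\,dm<\infty$, choose (by Fubini) a basepoint $x_0$ with $\int_X d(\cdot,x_0)^p\,dm<\infty$. By separability, partition $X$ into countably many Borel sets of diameter $<\delta$, keep the finitely many of largest mass so that the remaining part $R$ has $m(R)<\eta$, pick one representative in each kept piece, and let $T\colon X\to X$ send each point of a kept piece to its representative and each point of $R$ to $x_0$. Taking the coupling $\pi=(\mathrm{id},T)_*m$ between $m$ and $T_*m$ and using $|d(x,x')-d(Tx,Tx')|\le d(x,Tx)+d(x',Tx')$ gives $\DD_p(\,[X,d,m],[X,d,T_*m]\,)^p\le 2^p\int_X d(x,Tx)^p\,dm(x)\le 2^p\big(\delta^p+\int_R d(\cdot,x_0)^p\,dm\big)$, which is $<\varepsilon$ once $\delta$ and $\eta$ are small (absolute continuity of the integral). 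Finally, a finitely supported pseudo mm-space becomes, after collapsing the finitely many classes of mutually zero-distance points, an honest finite mm-space — complete and of full support — hence an element of $\mathbb{X}_p$ at $\DD_p$-distance $0$ from it. Chaining the two approximations yields density.

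Completeness is where I expect the main obstacle to be. Let $\mathcal{X}_n=[X_n,d_n,m_n]$ be $\DD_p$-Cauchy; passing to a subsequence and choosing $\pi_n\in\mathrm{Cpl}(m_n,m_{n+1})$ nearly realising $\DD_p(\mathcal{X}_n,\mathcal{X}_{n+1})$, one may assume $\sum_n\int\int|d_n(x,x')-d_{n+1}(y,y')|^p\,d\pi_n(x,y)\,d\pi_n(x',y')<\infty$. By the gluing lemma for couplings, assemble the $\pi_n$ into a Borel probability measure $P$ on $\prod_n X_n$ whose consecutive two-dimensional marginals are the $\pi_n$, and on $\big(\prod_n X_n\big)^2$ set $e_n(x,y)=d_n(x_n,y_n)$; then $\|e_n-e_{n+1}\|_{L^p(P\otimes P)}^p=\int\int|d_n-d_{n+1}|^p\,d\pi_n\,d\pi_n$ is summable, so $(e_n)$ is Cauchy in $L^p(P\otimes P)$ and converges to some $d_\infty$. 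Passing the $P^{\otimes 2}$-a.e.\ symmetry and the $P^{\otimes 3}$-a.e.\ triangle inequalities $e_n(x,z)\le e_n(x,y)+e_n(y,z)$ to the limit, and redefining $d_\infty$ to be $0$ on the diagonal, yields after discarding a null set a genuine pseudometric; together with $m_\infty:=P$ this defines a pseudo mm-space $\mathcal{X}_\infty=[\,\prod_n X_n,d_\infty,m_\infty\,]$ with $\int\int d_\infty^p\,dm_\infty\,dm_\infty=\|d_\infty\|_{L^p(P\otimes P)}^p<\infty$, i.e.\ $\mathcal{X}_\infty\in\mathbb{Y}_p$. Pushing $P$ forward under the projection $\prod_k X_k\to X_n\times\big(\prod_k X_k\big)$ produces a coupling of $m_n$ and $m_\infty$ witnessing $\DD_p(\mathcal{X}_n,\mathcal{X}_\infty)^p\le\|e_n-d_\infty\|_{L^p}^p\le\big(\sum_{k\ge n}\|e_k-e_{k+1}\|_{L^p}\big)^p\to 0$, so $\mathcal{X}_n\to\mathcal{X}_\infty$ and $\mathbb{Y}_p$ is complete; an isometric embedding as a dense subspace of a complete space then identifies $\mathbb{Y}_p$ with the completion of $(\mathbb{X}_p,\DD_p)$. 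The delicate points — and the reason pseudo (rather than genuine, complete, full-support) spaces are forced upon us — are exactly that $d_\infty$ exists only as an $L^p$-limit, hence is defined up to $P$-null sets and is only a pseudometric, and that $\prod_n X_n$ endowed with $d_\infty$ inherits neither completeness nor full support from the construction.
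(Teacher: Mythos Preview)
The paper does not prove this proposition; it is stated without proof in Section~\ref{space of spaces} as part of the background recalled from Sturm's monograph \cite{sturm2023space}. So there is no in-paper argument to compare against, and your three-step plan (isometric inclusion, density, completeness) is exactly the standard route that Sturm follows.

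That said, your completeness argument has a genuine gap at the sentence ``yields after discarding a null set a genuine pseudometric''. What you have is a symmetric nonnegative $d_\infty\in L^p(P\otimes P)$ satisfying the triangle inequality $P^{\otimes3}$-a.e.; this is \emph{not} the same as the existence of a Borel full-measure set $A$ on which $d_\infty$ is a pointwise pseudometric. The obstruction is that the bad set lives in $X^3$, and there is no Fubini-type trick that automatically produces $A$ with $A^3$ disjoint from a given $P^{\otimes3}$-null set. In Sturm's treatment this is exactly why one first lands in the larger space of \emph{gauged} measure spaces (Definition of $\mathbb{Y}$ in the paper: a Polish $X$, a measure $m$, and a merely $L^2$ symmetric function $f$), observes that the limit object is such a gauge, and then invokes a separate representation result to the effect that any gauge satisfying the triangle inequality a.e.\ is homomorphic to an honest pseudo metric measure space---typically obtained by pulling everything back to the standard model $([0,1],\mathrm{Leb})$ and using a chain-infimum regularisation $\tilde d(x,y)=\inf\sum d_\infty(x_i,x_{i+1})$. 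Your sketch is correct once you insert this step; as written, the phrase ``after discarding a null set'' hides precisely the nontrivial work.

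A minor second point: in your density argument you invoke separability of $(X,d)$ to partition into small-diameter pieces, but for a general pseudo mm-space the underlying $X$ is assumed Polish while $d$ is only a measurable pseudometric, so ``$d$-diameter $<\delta$'' pieces need not come from the topology of $X$. The cleanest fix is again to pass through the $[0,1]$-parametrisation (or to first quotient by the $d=0$ relation, which yields a genuine separable metric space carrying the push-forward of $m$).
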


\subsection{Gauged spaces}

The space of spaces, because of the constraints of distances, lacks of linearity. That's why Sturm introduced the notion of gauged spaces, in order to introduced similar notions than previously, but with a more linear structure.

\begin{definition}
A gauged measure space is $(X, f, m)$ where $X$ is a Polish space, $m$ is a Borel probability measure over $X$, and $f \in L^2_s(X \times X, m \otimes m)$, a symmetric $L^2$ function in $X \times X$, called a gauge.
\end{definition}

\begin{definition}[$L^2$-distortion distance]
Let $(X_1,f_1,m_1),(X_2,f_2,m_2)$ be two gauged measure spaces, there $L^2$-distortion distance is defined by :
\begin{equation*}
\begin{aligned}
\DD&((X_1,f_1,m_1),(X_2,f_2,m_2)) = \\ 
&\inf \Bigg\lbrace \left( \int_{X_1 \times X_2} \int_{X_1 \times X_2} | f_1(x_1,y_1) - f_2(x_2,y_2) |^2 d\bar{m}(x_1,x_2) d\bar{m}(y_1,y_2) \right)^{\frac{1}{2}} : \bar{m} \in Cpl(m_1,m_2) \Bigg\rbrace.
\end{aligned}
\end{equation*}
\end{definition}

\begin{definition}
Let $(X_1,f_1,m_1),(X_2,f_2,m_2)$ be two gauged measure spaces, there are called homomorphic if $ \DD((X_1,f_1,m_1),(X_2,f_2,m_2)) = 0$. It is an equivalence relation.
\end{definition}

\begin{definition}
The space of equivalence classes of homomorphic gauged spaces will be denoted $\mathbb{Y}$. It is complete with respect to the $L^2$-distortion distance.
\end{definition}

\subsection{The n-point spaces as an injection into the space of spaces}

We focus now on spaces with a finite number of points, and we study how they combine with the framework of metric measure spaces introduced in the previous sections.

Let $n \in \N^*$.

\begin{definition}
Let $(\lbrace x_1, ..., x_n \rbrace, f)$ be a finite gauged space. Its gauged matrix is $\tilde{f} \in \mathcal{S}_n(\R)$ defined by :
$$\forall i,j \in \llbracket 1,n \rrbracket, \tilde{f}_{ij} = f(x_i, x_j). $$
Same definition applies for metric spaces and distance matrices.
\end{definition}

We state the definition of isomorphic gauged (resp. metric) 
spaces in the context of spaces with a finite number of points.

\begin{definition}
Two $n$-point gauged (resp. metric) spaces $X = (\lbrace x_1, ..., x_n \rbrace, f)$ and $Y = (\lbrace y_1, ..., y_n \rbrace, g)$ are isomorphic if there exists $\phi : X \to Y$ such that for all $i,j \in \llbracket 1,n \rrbracket$, $f(x_i, x_j) = g(\phi(x_i), \phi(x_j)).$
\end{definition}

Let's emphasize that two isomorphic spaces does not necessarily have the same gauged (reps. distance) matrix. Indeed, when the order of the points is fixed, an isomorphism can be identified as a permutation $\sigma \in \mathfrak{S}_n$ such that $\forall i,j \in \llbracket 1,n \rrbracket, f(x_i, x_j) = g(\phi(x_i) \phi(x_j)) = g(y_{\sigma_i}, y_{\sigma_j})$. In this case we note $\tilde{f} = \sigma^* \tilde{g}$. Thus, two spaces are isomorphic if and only if there exists a $\sigma \in \mathfrak{S}_n$ such that for all $i,j \in \llbracket 1,n \rrbracket, \tilde{f}_{ij} = \tilde{g}_{\sigma_i \sigma_j}$.

These considerations leads us to introduce the following definitions when considering gauged (resp. distance) spaces with matrices :

\begin{definition}
We denote $M^{(n)}$ the space of $(n \times n)$ symmetric matrices with null diagonal, with the $l_2$-norm :

$$\forall f \in M^{(n)}, \|f\|^2 = \frac{1}{n^2} \sum_{i,j=1}^n f^2_{ij}.$$

We denote $\mathbb{M}^{(n)}$ the space $M^{(n)} / \mathfrak{S}_n$. It is equipped with the quotient distance induced by the $l_2$-norm :

\begin{align*}
    \forall f,g \in \mathbb{M}^{(n)}, d_{\mathbb{M}^{(n)}}(f,g) &= \min_{\sigma \in \mathfrak{S}_n} \|f - \sigma^*g \| \\
    &= \min_{\sigma \in \mathfrak{S}_n} \Big( \frac{1}{n^2}\sum_{i,j=1}^n | f_{ij} - g_{\sigma_i \sigma_j} |^2 \Big)^\frac{1}{2}.
\end{align*}

\end{definition}

\begin{definition}
Similarly we define $M^{(n)}_{\leq}$ the subspace of $M^{(n)}$ such that \textit{the triangle inequality is verified :} $$D \in  M^{(n)}_{\leq} \quad \text{iff} \quad  D \in M^{(n)} \quad \text{and} \quad \forall i,j,k \in \llbracket 1,n \rrbracket, D_{ij} \leq D_{ik} + D_{kj}.$$

$\mathfrak{S}_n$ still acts isometrically on $M^{(n)}_{\leq}$ so as previously we define $\mathbb{M}^{(n)}_{\leq} = M^{(n)}_{\leq} / \mathfrak{S}_n$. 
\end{definition}

Hence as announced, $\mathbb{M}^{(n)}$ represents gauged spaces with $n$ points and $\mathbb{M}^{(n)}_{\leq}$ represents pseudo-metric spaces with $n$ points. For the sake of simplicity, when speaking of finite metric spaces we will always mean pseudo-metric spaces. Note that all the reasoning that follows still apply.

Sturm provides a study of the geometric properties of these spaces, but here we will focus on metric considerations.

These spaces naturally injects into the space of spaces :

$$\application{\Phi}{\mathbb{M}^{(n)}}{\mathbb{Y}}{f}{\llbracket \llbracket 1,n \rrbracket, f, \frac{1}{n} \sum_{i=1}^n \delta_i \rrbracket }$$

Note that if $D \in \mathbb{M}^{(n)}_{\leq}$ then $\Phi(D) \in \mathbb{X}$.

\begin{definition}[$n$-point spaces]
We define $\mathbb{Y}^{(n)} := \Phi(\mathbb{M}^{(n)})$ and $\mathbb{X}^{(n)} := \Phi(\mathbb{M}^{(n)}_{\leq})$.
\end{definition}

\subsection{Statement of the problem}

The question is to prove whether these injections are isometric embeddings.

Some basic computations and considerations about bistochastic matrices gives the following :

\begin{proposition}\label{ineq}
For all $n \in \N^{*}, \Phi$ is $1$-lipschitz, \textit{i.e.}
$$\forall f,g \in \mathbb{M}^{(n)}, \DD( \Phi(f), \Phi(g)) \leq d_{\mathbb{M}^{(n)}}(f,g).$$
\end{proposition}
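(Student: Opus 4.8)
The plan is to turn the definition of $\DD(\Phi(f),\Phi(g))$ into an explicit finite quadratic optimization over bistochastic matrices, and then restrict that infimum to permutations. First I would record that $\Phi$ really descends to the quotient $\mathbb{M}^{(n)}$: relabelling the $n$ points is an isomorphism of gauged measure spaces that preserves the uniform measure $m := \frac{1}{n}\sum_{i=1}^{n}\delta_i$, so $\Phi(\sigma^* f)$ and $\Phi(f)$ are homomorphic for every $\sigma \in \mathfrak{S}_n$, and $d_{\mathbb{M}^{(n)}}$ likewise depends only on the classes of $f$ and $g$; hence the statement is meaningful.

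Next I would identify couplings with scaled bistochastic matrices. Since both marginals of $\Phi(f)$ and $\Phi(g)$ equal $m$, a measure $\bar m \in Cpl(m,m)$ on $\llbracket 1,n \rrbracket \times \llbracket 1,n \rrbracket$ is precisely a matrix $(\bar m_{ij})$ with nonnegative entries and all row and column sums equal to $\frac{1}{n}$; equivalently $\bar m = \frac{1}{n}P$ with $P$ an $n\times n$ bistochastic matrix. Substituting this into the definition of the $L^2$-distortion distance (with gauges $f_1 = f$, $f_2 = g$ and uniform measures) yields
\[
\DD\bigl(\Phi(f),\Phi(g)\bigr)^2 \;=\; \inf_{P\ \text{bistochastic}}\ \frac{1}{n^2}\sum_{i,j,k,l=1}^{n}\bigl|f_{ik}-g_{jl}\bigr|^2\,P_{ij}P_{kl},
\]
which is the quadratic optimization problem announced in the introduction.

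Then I would evaluate this objective on permutation matrices. For $\sigma \in \mathfrak{S}_n$, let $P^\sigma$ be the permutation matrix with $P^\sigma_{ij} = 1$ iff $j = \sigma_i$; it is bistochastic, and since $P^\sigma_{ij}P^\sigma_{kl}$ equals $1$ exactly when $j = \sigma_i$ and $l = \sigma_k$, the double sum collapses to $\frac{1}{n^2}\sum_{i,k=1}^{n}|f_{ik}-g_{\sigma_i\sigma_k}|^2 = \|f - \sigma^* g\|^2$. Therefore
\[
\DD\bigl(\Phi(f),\Phi(g)\bigr)^2 \;\le\; \min_{\sigma \in \mathfrak{S}_n}\|f-\sigma^* g\|^2 \;=\; d_{\mathbb{M}^{(n)}}(f,g)^2,
\]
and taking square roots gives the claim.

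There is no serious obstacle in this argument: the only care needed is in the bijection between $Cpl(m,m)$ and $\{\tfrac{1}{n}P : P\ \text{bistochastic}\}$ and in the index bookkeeping showing that the quadratic form evaluated at $P^\sigma$ reduces to $\|f-\sigma^* g\|^2$. It is worth emphasizing what is \emph{not} shown here: the reverse inequality — that the infimum over all bistochastic $P$ is attained at a permutation matrix — is exactly the isometric-embedding question the rest of the paper addresses, and that is where the genuine difficulty (the convexity and negative-type arguments of Sections~\ref{proof} and \ref{negative type}) lies.
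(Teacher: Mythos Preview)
Your proof is correct and follows essentially the same approach as the paper: identify couplings of the uniform measure with (scaled) bistochastic matrices, then restrict the infimum defining $\DD$ to permutation matrices to recover $d_{\mathbb{M}^{(n)}}$. The only cosmetic difference is that the paper expands $|f_{ij}-g_{kl}|^2$ to recast the comparison as $\max_{\sigma}\sum f_{ij}g_{\sigma_i\sigma_j}\le \max_{P\in\mathcal{B}_n}\sum f_{ij}g_{kl}p_{ik}p_{jl}$, which it needs anyway to formulate the quadratic optimization problem~(\ref{problem}), whereas you evaluate the cost at $P^\sigma$ directly.
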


\begin{proof}

Let $f,g \in \mathbb{M}^{(n)}$.
Here, transport plans between $f$ and $g$ can be seen as bistochastic matrices, \textit{i.e.} matrices where the sum of the terms in each line and in each column is equal to one, we denote this set $\mathcal{B}_n$.

Thus we have :
\begin{align*}
    \DD(\Phi(f), \Phi(g))^2 &=  \min_{P \in \mathcal{B}_n} \frac{1}{n^2} \sum_{i,j,k,l=1}^{n} |f_{ij}-g_{kl}|^2 p_{ik}p_{jl} \\
    &= \|f\|^2 + \|g\|^2 - \frac{2}{n^2} \max_{P \in \mathcal{B}_n} \sum_{i,j,k,l=1}^{n} f_{ij}g_{kl} p_{ik}p_{jl}.
\end{align*}

On the other hand we have :
\begin{align*}
    d_{\mathbb{M}^{(n)}}(f,g)^2 = \|f\|^2 + \|g\|^2 - \frac{2}{n^2} \max_{\sigma \in \mathfrak{S}_n} \sum_{i,j=1}^n f_{ij}g_{\sigma_i \sigma_j}.
\end{align*}

Remark that permutation matrices (\textit{i.e.} matrices  $(P_{\sigma})_{ij} = \1_{\sigma_i=j}$ where $\sigma \in \mathfrak{S}_n$) are bistochastic matrices, and that for $\sigma \in \mathfrak{S}_n$ and $i,j,k,l \in \llbracket 1,n \rrbracket, f_{ij} g_{kl} p_{{\sigma}_{ik}} p_{{\sigma}_{jl}} = f_{ij}g_{\sigma_i \sigma_j} \1_{\sigma_i = k} \1_{\sigma_j = l}$. It yields :
$$\max_{\sigma \in \mathfrak{S}_n} \sum_{i,j=1}^n f_{ij}g_{\sigma_i \sigma_j} \leq \max_{P \in \mathcal{B}_n} \sum_{i,j,k,l=1}^{n} f_{ij}g_{kl} p_{ik}p_{jl} $$ 
and thus the desired inequality.

\end{proof}

To prove that we have an isometric embedding, it remains to prove that it is indeed an embedding (this is the case according to Theorem \ref{embed_sturm}), and that it is an isometry, i.e. that the reverse inequality of Proposition \ref{ineq} holds. According to the last proof, this is a quadratic optimization problem, where permutation and bistochastic matrices are involved.

The difference between these two distances is very common in Optimal Transport theory. As in the Monge problem, $d_{\mathbb{M}^{(n)}}$ is based on transport maps, which always exist here only because we consider uniformly weighted spaces with the same number of points, whereas $\DD$ adopts the Kantorovitch point of view, relying on transport plans.

The link between permutation matrices and bistochastic matrices is actually deeper as stated by this well-known theorem :

\begin{theorem}[Birkhoff - von Neumann]\label{BVN}
$\mathcal{B}_n$ is a compact convex polytope whose extreme points are exactly permutation matrices.
\end{theorem}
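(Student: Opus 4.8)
The plan is to establish the four assertions of the statement in sequence. The first three are essentially formal, so I would dispatch them quickly, and concentrate all the effort on the last one, namely that the extreme points of $\mathcal{B}_n$ are precisely the permutation matrices.

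First I would observe that $\mathcal{B}_n$ is the subset of $\R^{n\times n}$ cut out by the $2n$ linear equalities $\sum_j p_{ij}=1$ and $\sum_i p_{ij}=1$ together with the $n^2$ linear inequalities $p_{ij}\ge 0$. Being an intersection of finitely many closed half-spaces and hyperplanes, $\mathcal{B}_n$ is closed and convex; and since those constraints force $0\le p_{ij}\le 1$ for every entry, it is bounded, hence compact. A bounded polyhedron is a polytope (Minkowski--Weyl), which gives the polytope claim, and in particular $\mathcal{B}_n$ is the convex hull of its finite set of extreme points.

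Next I would check that every permutation matrix $P_\sigma$ is extreme: if $P_\sigma=\tfrac12(A+B)$ with $A,B\in\mathcal{B}_n$, then at each position where $P_\sigma$ vanishes both $A$ and $B$ vanish too (by nonnegativity), so $A$ and $B$ are supported on the $n$ positions $(i,\sigma_i)$; the row-sum constraint then forces $A_{i,\sigma_i}=B_{i,\sigma_i}=1$, so $A=B=P_\sigma$. The core step is the converse. Let $P\in\mathcal{B}_n$ \emph{not} be a permutation matrix; I want to exhibit it as a nontrivial midpoint. The key observation is that any row of $P$ containing an entry in $(0,1)$ must contain at least two such entries, since otherwise the remaining entries of that row all lie in $\{0,1\}$ yet must sum to $1-p_{ij_0}\in(0,1)$, which is impossible; the same holds for columns. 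Hence the bipartite ``support graph'' $G$, on the disjoint vertex sets of rows and columns, with an edge $\{r_i,c_j\}$ exactly when $p_{ij}\in(0,1)$, has at least one edge and has every non-isolated vertex of degree $\ge 2$; therefore $G$ contains a cycle, which is even since $G$ is bipartite, say $r_{i_1}c_{j_1}r_{i_2}c_{j_2}\cdots r_{i_k}c_{j_k}r_{i_1}$. Placing values $+1$ and $-1$ alternately along the edges of this cycle yields a nonzero matrix $E$, supported on positions where $p_{ij}\in(0,1)$, all of whose row sums and column sums vanish. Then $P\pm\varepsilon E$ has the same row and column sums as $P$ for every $\varepsilon$, and for $\varepsilon>0$ small enough all perturbed entries remain in $[0,1]$ (they started strictly inside), so $P\pm\varepsilon E\in\mathcal{B}_n$ and $P=\tfrac12(P+\varepsilon E)+\tfrac12(P-\varepsilon E)$ is not extreme.

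The only genuinely nontrivial point is the production of this alternating cycle, which reduces to the elementary graph-theoretic fact that a graph in which no vertex has degree $1$ (equivalently, minimum degree $\ge 2$ among non-isolated vertices) contains a cycle, applied here to the support graph of the fractional part of $P$; the rest is bookkeeping about row and column sums. One could alternatively deduce the extreme-point description from total unimodularity of the incidence matrix of the bipartite constraints, but the cycle argument above is self-contained and more transparent.
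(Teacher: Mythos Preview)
Your argument is correct and is the standard ``alternating cycle'' proof of Birkhoff--von Neumann: the polyhedral description gives compactness and convexity, permutation matrices are easily seen to be extreme, and a non-permutation doubly stochastic matrix is shown to be non-extreme by finding an even cycle in the bipartite graph of its fractional entries and perturbing along it. The only minor point worth making explicit is that a doubly stochastic matrix with all entries in $\{0,1\}$ is automatically a permutation matrix (each row and each column then contains exactly one $1$), so the hypothesis ``$P$ is not a permutation matrix'' does guarantee the existence of an entry in $(0,1)$ and hence at least one edge in your support graph $G$.

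As for comparison with the paper: there is nothing to compare. The paper states Theorem~\ref{BVN} as a classical result and provides no proof whatsoever; it is invoked only to identify the extreme points of $\mathcal{B}_n$ for the convexity argument in Section~\ref{proof}. Your proposal therefore supplies strictly more than the paper does on this point.
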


This theorem gives a geometric interpretation of our quadratic optimization problem : does some function take extremal value at an extrem point of our set ?

In the context of metric spaces, Sturm proved that the injection of $\mathbb{X}^{(n)}$ in $\bar{\mathbb{X}}$ is an embedding.

\begin{theorem}\label{embed_sturm}
The injection $\Phi : \mathbb{M_{\leq}}^{(n)} \to \bar{\mathbb{X}}$ is an embedding, \textit{i.e.} an homeomorphism onto its image.
\end{theorem}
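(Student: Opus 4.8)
The plan is to show that $\Phi : \mathbb{M}^{(n)}_{\leq} \to \bar{\mathbb{X}}$ is a continuous injective map which is also an open map onto its image, or equivalently a homeomorphism onto its image. Injectivity follows from the general fact (stated above as the proposition characterizing $\DD_p = 0$) that $\DD(\Phi(f),\Phi(g)) = 0$ iff the gauged spaces are isomorphic, together with the fact that for finite uniformly weighted spaces isomorphism of gauged spaces is exactly equality in $\mathbb{M}^{(n)}_{\leq}$; this uses that an isomorphism of mm-spaces respecting the uniform measure must permute the $n$ atoms. Continuity is immediate from Proposition \ref{ineq}, which says $\Phi$ is $1$-Lipschitz.

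The remaining and main point is that $\Phi^{-1}$ is continuous on the image, i.e. $\DD(\Phi(f_k),\Phi(f)) \to 0$ implies $d_{\mathbb{M}^{(n)}_{\leq}}(f_k,f) \to 0$. First I would observe that $\mathbb{M}^{(n)}_{\leq}$, and more generally the closed ball $\{ f : \|f\| \le R \}$ inside it, is compact: it is the image under the quotient map of a closed bounded subset of the finite-dimensional space $M^{(n)}$. Given a sequence $f_k$ with $\DD(\Phi(f_k),\Phi(f)) \to 0$, the sequence $(\|f_k\|)$ stays bounded (for instance because $\|\,\|f_k\| - \|f\|\,\| \le \DD(\Phi(f_k),\Phi(f))$, since the size is a $\DD$-Lipschitz functional, or directly by expanding the infimum). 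Hence $(f_k)$ lies in a compact subset of $\mathbb{M}^{(n)}_{\leq}$, so any subsequence has a further subsequence converging to some $g \in \mathbb{M}^{(n)}_{\leq}$; by continuity of $\Phi$ we get $\DD(\Phi(g),\Phi(f)) = 0$, hence $g = f$ by injectivity. Since every subsequence has a sub-subsequence converging to $f$, the whole sequence converges to $f$ in $\mathbb{M}^{(n)}_{\leq}$, which is what we wanted.

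The step I expect to be the main obstacle is verifying cleanly that, for finite uniformly weighted spaces, the abstract notion of isomorphism of gauged (or metric) measure spaces coincides with the combinatorial notion $\tilde f = \sigma^* \tilde g$ for some $\sigma \in \mathfrak{S}_n$ — in particular that a measure-preserving Borel isomorphism between two such spaces must send atoms to atoms and therefore induce a permutation. This is intuitively clear because $\frac1n\sum \delta_i$ has exactly $n$ atoms each of mass $\frac1n$, and a measure isomorphism preserves atoms and their masses; but it needs to be stated. Once this identification is in place, the compactness argument above closes the proof. (This is essentially the argument appearing in \cite{sturm2023space}; I would cite it rather than reprove the identification in detail.)
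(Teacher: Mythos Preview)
Your proposal is correct, and in fact more detailed than the paper's own proof: the paper simply writes ``See discussion in \cite{sturm2023space}, Remark 5.26'' and gives no argument at all. Your compactness sketch (1-Lipschitz continuity from Proposition~\ref{ineq}, boundedness of $\|f_k\|$ via the size functional, subsequential limits forced to equal $f$ by injectivity) is the standard way to upgrade a continuous bijection to a homeomorphism here, and your identification of the one delicate point---that $\DD=0$ between two uniformly weighted $n$-point (pseudo-)metric spaces forces permutation equivalence of the matrices---is exactly the content hidden behind the citation. Since you end by saying you would also cite \cite{sturm2023space} for that identification, your approach and the paper's coincide; you have just unpacked what the reference contains.
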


\begin{proof}
See discussion in \cite{sturm2023space}, Remark 5.26.
\end{proof}

To sum up everything, the problem is the following : prove or disprove 

\begin{equation}\label{problem}
    \max_{\sigma \in \mathfrak{S}_n} \sum_{i,j=1}^n f_{ij}g_{\sigma_i \sigma_j} = \max_{P \in \mathcal{B}_n} \sum_{i,j,k,l=1}^{n} f_{ij}g_{kl} p_{ik}p_{jl}
\end{equation}

whether when $f,g$ are $(n \times n)$ symmetric matrices with null diagonal, or when $f,g$ are distance matrices of some $n$-point metric spaces.

\subsection{Counterexample for gauged spaces}
We prove that equality (\ref{problem}) does not hold in general for gauged spaces.

Let's denote $f$ the $n \times n$ matrix where $f_{ij} = \1_{i \neq j}$, and $g = - f$. We have $f,g \in M^{(n)}$. By notation abuse we will still write their image in the quotient space $f,g \in \mathbb{M}^{(n)}$.

Remark that for all $\sigma \in \mathfrak{S}_n$, we have $\sigma_* g= g$ so :
\begin{align*}
    d_{\mathbb{M}^{(n)}}(f,g) &= \min_{\sigma \in \mathfrak{S}_n} \|f - \sigma^*g \| \\
    &= \| f-g \| \\
    &= \Big( \frac{1}{n^2} \sum_{i,j=1}^n 4 \times \1_{i \neq j} \Big)^\frac{1}{2}\\
    &= 2\sqrt{1 - \frac{1}{n}}.
\end{align*}

We denote $M$ the $n \times n$ matrix where all elements are equal to $\frac{1}{n}$, thus $M \in \mathcal{B}_n$.

\begin{align*}
    \DD(\Phi(f), \Phi(g)) &\leq \Big( \frac{1}{n^2} \sum_{i,j,k,l=1}^n |f_{ij}-g_{kl}|^2 M_{ik}M_{jl} \Big)^{\frac{1}{2}} \\
    &\leq \Big( \frac{1}{n^4} \sum_{i,j,k,l=1}^n (\1_{i\neq j} + \1_{k \neq l} )^2 \Big)^{\frac{1}{2}} \\
    &\leq \Big( \frac{1}{n^4} (2n^2 (n^2-n) + 2 (n^2-n)^2) \Big)^{\frac{1}{2}} \\
    &\leq \left( 4-\frac{6}{n} + \frac{1}{n^2} \right)^{\frac{1}{2}}
\end{align*}

Then, 
\begin{align*}
    \left( 4-\frac{6}{n} + \frac{1}{n^2} \right)^{\frac{1}{2}} < 2\sqrt{1 - \frac{1}{n}} &\iff 4n^2 - 6n + 1 < 4n^2 - 4n \\
    &\iff 1 < 2n \\
    &\iff 1 < n.
\end{align*}

This proves that $\DD(\Phi(f), \Phi(g)) < d_{\mathbb{M}^{(n)}}(f,g)$ and so that $\Phi$ is not an isometry.

Thus, the question is to determine whether the restrictions induced by the metric setting are enough to prove equality (\ref{problem}).

\section{Proof for spaces of negative type}\label{proof}

In this section we prove that (\ref{problem}) holds when considering metric spaces of negative type.

We fix $D^X, D^Y \in \mathbb{M}_{\leq}^{(n)}$.

Let's introduce $$\application{h}{\mathcal{B}_n}{\R}{P}{\sum_{i,j,k,l=1}^n D^X_{ij}D^Y_{kl} p_{ik} p_{jl}}.$$

Remark that for $P \in \mathcal{B}_n$, we have 
\begin{equation}\label{ecr_tr}
    h(P) = \tr(P^T D^X P (D^Y)^T).
\end{equation}

According to Theorem \ref{BVN}, equality (\ref{problem}) is equivalent to :
\begin{equation}\label{eq2}
    \max_{\mathcal{B}_n} h = \max_{\operatorname{ext}(\mathcal{B}_n)} h.
\end{equation}

The proof is inspired by \cite{maron2018probably} and goes as follows : we show that we have equality (\ref{eq2}) when $h$ is convex, so first we find a sufficient condition for h to be convex. Then, using property of Kronecker product this gives us a condition on the tensor product of the distance matrices of each space. Finally, we prove that this is verified if each space is of negative type.

\subsection{A convexity argument}

The idea of the proof is to find a sufficient condition for $h$ to be convex. Thus by the following corrolary of the Krein-Milman theorem, we have equality (\ref{eq2}).

\begin{lemma}\label{key}
    Let $K \subset \R^k$ be a convex compact nonemptset, and $f : K \to \R$ be a convex function. Then $\max_K f = \max_{ext(K)} f$.
\end{lemma}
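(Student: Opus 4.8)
The plan is to prove the statement that for a nonempty convex compact set $K \subset \R^k$ and a convex function $f : K \to \R$, the maximum of $f$ over $K$ is attained at an extreme point. First I would recall that a continuous function on a compact set attains its maximum, so there exists $x^* \in K$ with $f(x^*) = \max_K f$; in fact, one should be slightly careful, since a convex function on a convex set need not be continuous at the boundary, but it is continuous on the relative interior and the hypothesis here is that $f$ is given as a function on all of $K$ — I would simply assume (as is standard in this finite-dimensional polytope-flavored context, and as will be the case in the application where $h$ is a polynomial) that $f$ is continuous, so that the maximum is attained.

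The core of the argument is then the Krein--Milman / Minkowski theorem: since $K \subset \R^k$ is convex and compact, $K$ is the convex hull of its extreme points $\operatorname{ext}(K)$, which is nonempty. Hence the maximizer $x^*$ can be written as a finite convex combination $x^* = \sum_{m=1}^N \lambda_m e_m$ with $e_m \in \operatorname{ext}(K)$, $\lambda_m \geq 0$, and $\sum_m \lambda_m = 1$ (by Carath\'eodory one may even take $N \leq k+1$, though this is not needed). Applying convexity of $f$ gives
\begin{equation*}
    \max_K f = f(x^*) = f\Big( \sum_{m=1}^N \lambda_m e_m \Big) \leq \sum_{m=1}^N \lambda_m f(e_m) \leq \max_{1 \leq m \leq N} f(e_m) \leq \max_{\operatorname{ext}(K)} f.
\end{equation*}
The reverse inequality $\max_{\operatorname{ext}(K)} f \leq \max_K f$ is immediate since $\operatorname{ext}(K) \subseteq K$. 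Combining the two yields equality, which is exactly the claim.

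The only genuine subtlety — and the thing I would flag as the main point rather than a real obstacle — is the justification that the maximum is attained and that $f$ behaves well enough to invoke the convex-combination bound; in the intended application $K = \B_n$ is a polytope (by Birkhoff--von Neumann) and $h$ is a polynomial in the entries of $P$, hence continuous, so both issues evaporate. For the abstract lemma I would either add continuity of $f$ as a hypothesis or invoke the fact that a finite convex combination of points of $K$ lies in $K$ together with Jensen's inequality for the convex function $f$; no deep input beyond Krein--Milman (or even just Minkowski's theorem in finite dimensions) is required. I would write the proof in three short lines: attainment of the max, decomposition via Krein--Milman, and the chain of inequalities above.
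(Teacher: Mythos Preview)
Your proof is correct and follows essentially the same approach as the paper: take a maximizer, decompose it as a finite convex combination of extreme points via Krein--Milman, and apply convexity of $f$ to conclude. The paper's version is slightly terser (it does not discuss the attainment/continuity caveat you raise), but the argument is identical.
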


\begin{proof}
    Let $y \in K$ such that $f(y) = \max_K f$. By Krein-Milman theorem there exists ${n \in \N}, \alpha_1, ..., \alpha_n \in \R_+, x_1, ..., x_n \in Ext(K)$ such that $\sum_{i=1}^n \alpha_i = 1$ and $y = \sum_{i=1}^n \alpha_i x_i$.
    $f$ is convex so $\max_K f = f(y) \leq \sum_{i=1}^n \alpha_i f(x_i) \leq \max_K f$. So for $1 \leq i \leq n, f(x_i) = \max_K f$ and $\max_K f = \max_{ext(K)} f$.
\end{proof}

For smooth functions, a slight modification of the second order characterization of convex function will give us a sufficient condition in terms of the hessian.

\begin{proposition}
    Let $f : \R^k \to \R$ be a $\mathcal{C}^2$ function, let $K \subset \R^k$ be a convex compact set. If $\forall x \in K, \forall v \in \Span(K), v^T (\Hess f)(x) v \geq 0$ then $f\vert_K$ is convex.
\end{proposition}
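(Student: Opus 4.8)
The statement is essentially the second-order criterion for convexity, but restricted to the affine subspace spanned by $K$ rather than on all of $\R^k$. The plan is to reduce to the classical one-dimensional characterization of convexity. Let $x, y \in K$ be arbitrary and set $v = y - x$. Since $K$ is convex, the segment $[x,y]$ lies in $K$, and moreover $v = y - x \in \Span(K)$ — here one should read $\Span(K)$ as the linear span of differences of points of $K$ (equivalently the direction space of the affine hull of $K$), so that the hypothesis applies to exactly the vectors $v$ that arise this way. Define $\varphi : [0,1] \to \R$ by $\varphi(t) = f(x + t v)$. Since $f$ is $\mathcal{C}^2$, $\varphi$ is $\mathcal{C}^2$ with $\varphi''(t) = v^T (\Hess f)(x + tv)\, v$. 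The point $x + tv$ lies in $K$ for $t \in [0,1]$, and $v \in \Span(K)$, so the hypothesis gives $\varphi''(t) \geq 0$ on $[0,1]$.

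Next I would invoke the standard fact that a $\mathcal{C}^2$ function on an interval with nonnegative second derivative is convex on that interval; hence $\varphi$ is convex on $[0,1]$, which means exactly that for all $s,t \in [0,1]$ and $\lambda \in [0,1]$,
$$\varphi(\lambda s + (1-\lambda) t) \leq \lambda \varphi(s) + (1-\lambda)\varphi(t).$$
Taking $s = 1$, $t = 0$ yields $f(\lambda y + (1-\lambda) x) = \varphi(\lambda) \leq \lambda f(y) + (1-\lambda) f(x)$. Since $x, y \in K$ and $\lambda \in [0,1]$ were arbitrary, this is precisely the statement that $f\vert_K$ is convex.

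The only genuine subtlety — and the step I would be most careful about — is the interpretation of $\Span(K)$ and why every relevant chord direction $y - x$ belongs to it. If $\Span(K)$ denotes the linear span of the set $K$ itself, then differences $y-x$ need not literally lie in it unless $0 \in \Span(K)$, which is automatic here since $\Span(K)$ is a linear subspace and contains $K$ (so it contains both $x$ and $y$, hence $y-x$). Thus no real obstacle arises; the compactness of $K$ is not even needed for this implication and is presumably included only to match the setting of Lemma \ref{key}. Everything else is the routine reduction to one variable and the elementary one-dimensional convexity criterion.
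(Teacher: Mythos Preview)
Your proof is correct and follows essentially the same approach as the paper: both reduce to one-dimensional convexity along segments $t \mapsto f(x + t(y-x))$, compute the second derivative via the Hessian, and invoke the hypothesis since $x + t(y-x) \in K$ and $y - x \in \Span(K)$. Your additional care about the meaning of $\Span(K)$ and why chord directions lie in it is a useful clarification that the paper leaves implicit.
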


\begin{proof}
    $f\vert_K$ is convex if and only if it is convex along any line in $K$. For $a,b \in K$, we set for $\lambda \in [0,1], g_{a,b}(\lambda) = f(\lambda a + (1-\lambda)b)$. $f\vert_K$ is convex iff $g_{a,b}$ is convex for all $a,b \in K$. $g_{a,b}$ is $\mathcal{C}^2$ and it is convex if for all $\lambda \in [0,1], g_{a,b}''(\lambda) \geq 0$, i.e. $(a-b)^T(Hess f)(b + \lambda (a-b))(a-b) \geq 0$. This is true for every $a,b \in K, \lambda \in [0,1]$ according to our hypothesis. So $f\vert_K$ is convex.
\end{proof}

Combining both proposition gives :

\begin{corollary}\label{cor}
Let $f : \R^k \to \R$ be a $\mathcal{C}^2$ function, let $K \subset \R^k$ be convex compact. 

If $\forall x \in K, \forall v \in \Span(K), v^T (\Hess f)(x) v \geq 0$ then $\max_K f$ is attained on an extreme point of $K$.
\end{corollary}

\subsection{Kronecker product and vectorization}
We recall a few properties of Kronecker product and vectorization. Proofs rely on computations that will not be explicited here. In the context of our problem it allows us to split the condition obtained in Corollary \ref{cor} into a condition in each space.

\begin{definition}
    Let $A$ be a $m \times n$ matrix and $B$ a $p \times q$ matrix. Its Kronecker product is the $pm \times qn$ matrix defined by $A \otimes B =
    \begin{pmatrix}
        a_{11} B &  ... & a_{1m}B \\
        \vdots & \ddots & \vdots \\
        a_{n1} B & \cdots & a_{nm} B
    \end{pmatrix}$.
    It is bilinear and associative.
\end{definition}

We will use this property later in our proof :

\begin{proposition}
    If A, B are square matrices of size $n$ and $m$, with eigenvalues $\lambda_1, ..., \lambda_n$ and $\mu_1, ..., \mu_m$ then the eigenvalues of $A \otimes B$ are $\lambda_i \mu_j$ for $1 \leq i \leq n, 1 \leq j \leq n$.
\end{proposition}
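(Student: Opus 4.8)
The plan is to reduce the statement to the case of triangular matrices via Schur decomposition. First I would record the \emph{mixed product property} of the Kronecker product: whenever the sizes are compatible, $(A \otimes B)(C \otimes D) = (AC) \otimes (BD)$, which is checked by a direct block computation of the kind the preceding paragraph says will not be detailed. As a warm-up (and as the whole argument in the diagonalizable case), note that if $Av = \lambda v$ and $Bw = \mu w$ with $v, w \neq 0$, then $(A \otimes B)(v \otimes w) = (Av)\otimes(Bw) = \lambda\mu\,(v \otimes w)$, so every product $\lambda_i \mu_j$ is an eigenvalue of $A \otimes B$, with eigenvector $v_i \otimes w_j$.

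To cover arbitrary (possibly non-diagonalizable) complex matrices, I would invoke Schur triangularization: write $A = U T_A U^{*}$ and $B = V T_B V^{*}$ with $U, V$ unitary and $T_A, T_B$ upper triangular, the diagonal of $T_A$ being $\lambda_1, \dots, \lambda_n$ and the diagonal of $T_B$ being $\mu_1, \dots, \mu_m$. Applying the mixed product property twice, together with $(U \otimes V)^{*} = U^{*} \otimes V^{*}$ and $(U \otimes V)(U \otimes V)^{*} = (UU^{*}) \otimes (VV^{*}) = I$, one obtains
\[
A \otimes B = (U \otimes V)(T_A \otimes T_B)(U \otimes V)^{*},
\]
so that $A \otimes B$ is unitarily similar to $T_A \otimes T_B$.

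It then remains to observe that $T_A \otimes T_B$ is itself upper triangular: by the block definition of the Kronecker product it is block upper triangular, each diagonal block being $(T_A)_{ii}\,T_B$, which is upper triangular, and the blocks below the diagonal vanish; hence the diagonal entries of $T_A \otimes T_B$ are exactly the products $(T_A)_{ii}(T_B)_{jj} = \lambda_i \mu_j$. Since the spectrum of a triangular matrix is its diagonal and unitary similarity preserves eigenvalues with multiplicities, the eigenvalues of $A \otimes B$ are precisely the $nm$ numbers $\lambda_i \mu_j$, which is the claim (counting $1 \le i \le n$, $1 \le j \le m$).

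I do not anticipate a real obstacle: the diagonalizable case is immediate from the eigenvector computation, and the only point requiring care is the general case, which is exactly what passing to Schur form removes, plus the elementary bookkeeping that the diagonal of a Kronecker product of two upper triangular matrices is the pairwise product of their diagonals.
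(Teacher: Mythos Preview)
Your argument is correct and is the standard route: the mixed product identity gives unitary similarity of $A\otimes B$ to $T_A\otimes T_B$ after Schur triangularization, and the diagonal of a Kronecker product of upper triangular matrices is the list of pairwise products of the diagonals, yielding the spectrum $\{\lambda_i\mu_j\}$ with multiplicities. You also silently fixed the index typo in the statement (the range of $j$ should be $1\le j\le m$).

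As for comparison with the paper: there is nothing to compare against. The paper explicitly announces that in this subsection ``proofs rely on computations that will not be explicited here'' and states the proposition without proof; it is quoted as a standard property of the Kronecker product. If anything, note that in the paper's only application (Proposition~\ref{tensor}) the matrices $F^TAF$ and $G^TBG$ are real symmetric, hence orthogonally diagonalizable, so your ``warm-up'' eigenvector computation $(A\otimes B)(v\otimes w)=\lambda\mu\,(v\otimes w)$ together with the fact that $\{v_i\otimes w_j\}$ is then a basis already suffices for what the paper needs; the Schur step is only required for the general (possibly non-diagonalizable) statement.
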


\begin{definition}
    Let $A$ be a $ m \times n$ matrix. Its vectorization is denoted $[A]$ and is the $mn \times 1$ matrix obtained by stacking its column on top of one another :
    $$[A] = (a_{11}, ..., a_{m1}, a_{21}, ..., a_{n1}, ..., a_{nm})^T.$$
\end{definition}

\begin{proposition}\label{base}
    Let $a \in \R^n, b \in \R^m$. Then, $[ab^T] = b \otimes a$.
\end{proposition}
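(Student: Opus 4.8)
The plan is to verify the identity by a direct comparison of coordinates, being careful about the column-major convention built into the definition of vectorization. First I would observe that $ab^T$ is the $n \times m$ matrix whose $(i,j)$ entry is $a_i b_j$; in particular its $j$-th column is the vector $b_j\, a \in \R^n$. Stacking these columns on top of one another as prescribed by the definition of $[\cdot]$, the block of $[ab^T]$ indexed by $j \in \llbracket 1,m \rrbracket$ is exactly $b_j\, a$, so for $i \in \llbracket 1,n \rrbracket$ the $\bigl((j-1)n + i\bigr)$-th coordinate of $[ab^T]$ equals $a_i b_j$.

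On the other hand, regarding $b \in \R^m$ as an $m \times 1$ matrix and $a \in \R^n$ as an $n \times 1$ matrix, the Kronecker product $b \otimes a$ is by definition the $mn \times 1$ matrix whose $j$-th block (for $j \in \llbracket 1,m \rrbracket$) is $b_j\, a$; hence its $\bigl((j-1)n+i\bigr)$-th coordinate is $b_j a_i = a_i b_j$ as well. Since both $[ab^T]$ and $b \otimes a$ are vectors of the same length $mn$ and agree in every coordinate, they are equal.

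There is essentially no obstacle here: the only point that needs attention is the matching of indexing conventions, namely that $[\cdot]$ stacks \emph{columns} (not rows) and that in $b \otimes a$ it is the index of $b$ that labels the outer blocks, so that the two block decompositions line up. Once this bookkeeping is fixed the identity is immediate; it is precisely this elementary fact, together with the bilinearity and associativity of $\otimes$, that will later allow us to rewrite the objective $h$ in terms of the Kronecker product of the two distance matrices.
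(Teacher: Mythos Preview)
Your proof is correct; the direct coordinate comparison is exactly the kind of computation the paper alludes to when it says that ``proofs rely on computations that will not be explicited here,'' and in fact the paper gives no proof of this proposition at all. Your careful tracking of the column-stacking convention for $[\cdot]$ against the block structure of $b\otimes a$ is precisely the bookkeeping needed, so there is nothing to add.
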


\begin{proposition}
    Let $A,B,C,D$ be matrices such that the following operations make sense, we have :
    $$\tr(A^TBCD^T) = [A]^T (D \otimes B) [C].$$
\end{proposition}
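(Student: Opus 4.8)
The plan is to derive the identity from two elementary building blocks: the expression of the Hilbert–Schmidt inner product through vectorization, and the ``$\mathrm{vec}$ of a triple product'' rule $[BCD^{T}] = (D \otimes B)[C]$.

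First I would record that, for any two matrices $X,Y$ of the same size, $\tr(X^{T}Y) = \sum_{i,j} X_{ij}Y_{ij}$, which by the definition of $[\,\cdot\,]$ is precisely $[X]^{T}[Y]$. Since $A$ and $BCD^{T}$ have the same shape (the product $A^{T}BCD^{T}$ being a square matrix), applying this with $X = A$ and $Y = BCD^{T}$ gives $\tr(A^{T}BCD^{T}) = [A]^{T}[BCD^{T}]$. It then remains to prove $[BCD^{T}] = (D \otimes B)[C]$.

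Both sides of this last identity are linear in $C$, so it suffices to check it on the rank-one matrices $C = e_{p}e_{q}^{T}$ ranging over the standard basis, where $e_{p},e_{q}$ are the standard basis vectors of the appropriate spaces. For such a $C$ one has $BCD^{T} = (Be_{p})(De_{q})^{T}$, so by Proposition \ref{base} the left-hand side equals $(De_{q}) \otimes (Be_{p})$. On the right-hand side, Proposition \ref{base} gives $[C] = e_{q}\otimes e_{p}$, and the mixed-product property of the Kronecker product yields $(D\otimes B)(e_{q}\otimes e_{p}) = (De_{q})\otimes(Be_{p})$; hence both sides agree, and combining the two steps proves the proposition.

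The only mildly delicate point is bookkeeping: one must fix once and for all the column-stacking convention used in the definition of $[\,\cdot\,]$ and check that the dimensions are compatible (with $A$ of size $m\times n$ one needs $B$ of size $m\times p$, $C$ of size $p\times q$, $D$ of size $n\times q$), so that $D\otimes B$ has size $mn\times pq$ and $[A]^{T}(D\otimes B)[C]$ is a scalar. The mixed-product rule $(D\otimes B)(y\otimes x)=(Dy)\otimes(Bx)$ is itself a routine index computation from the definition of the Kronecker product; alternatively, one can bypass the building-block approach altogether and verify the identity directly by expanding both sides, since $\tr(A^{T}BCD^{T}) = \sum_{i,j,k,l} A_{ji}B_{jk}C_{kl}D_{il}$ and the same quadruple sum is recovered from $[A]^{T}(D\otimes B)[C]$ once the indices are unwound.
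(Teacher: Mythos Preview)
Your argument is correct: the reduction $\tr(A^{T}Y)=[A]^{T}[Y]$ together with the identity $[BCD^{T}]=(D\otimes B)[C]$, checked on a rank-one basis via Proposition~\ref{base} and the mixed-product rule, is the standard and clean way to obtain this formula, and your direct index expansion is also valid. There is nothing to compare against here, since the paper explicitly omits the proof (``Proofs rely on computations that will not be explicited here''); your write-up simply fills in what the paper leaves to the reader.
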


Indeed with (\ref{ecr_tr}), we obtain :
$$\forall P \in \mathcal{B}_n, h(P) = \tr(P^TD^XP(D^Y)^T) = [P]^T (D^Y \otimes D^X) [P].$$

Thus, we have for all $P \in \Span(\mathcal{B}_n), (\Hess h)(P) = 2 D^Y \otimes D^X$. Consequently, according to Corollary \ref{cor}, if for all $x \in \Span(\mathcal{B}_n), [x]^T (D^Y \otimes D^X) [x] \geq 0$, i.e. if for all $x \in [\Span(\mathcal{B}_n)], x^T (D^Y \otimes D^X) x \geq 0$,  then (\ref{problem}) is true.

\subsection{Conditionally negative semi-definite matrices}

This condition can be split into a condition for each space. It relies on a decomposition of $[\Span(\mathcal{B}_n)]$ as a tensor product of two identical vector spaces. We need first to introduce \textit{conditionally negative semi-definite} matrices.

\begin{definition}
$A \in S_n(\R)$ is \textit{conditionally negative semi-definite} with respect to a subspace $V \subset \R^n$ if $$\forall x \in V, v^TAv \leq 0.$$
With $d = \dim(V)$ and $F = (f_1, ..., f_d)$ an orthonormal basis of $V$,
\begin{align*}
    \text{A is conditionally negative semi-definite with respect to V} &\iff \forall x \in \R^d, x^TF^TAFx \leq 0 \\
    &\iff {\Sp(F^TAF) \subset \R_-}.
\end{align*}
Similarly we say that $A \in S_n(\R)$ is \textit{conditionally positive semi-definite} with respect to $V$ if  ${\Sp(F^TAF) \subset \R_+}$.
\end{definition}

\begin{proposition}\label{tensor}
If $A$ (resp. $B$) are conditionally negative semi-definite with respect to $V$ and (resp. $W$), then $B \otimes A$ is conditionally positive semi-definite with respect to $V \otimes W$.
\end{proposition}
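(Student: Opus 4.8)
The plan is to reduce everything to the spectral reformulation of conditional semi-definiteness and then to read off the signs of the eigenvalues from the Kronecker-product eigenvalue proposition. Set $d=\dim V$ and $e=\dim W$, let $F=(f_1,\dots,f_d)$ be an orthonormal basis of $V$ regarded as an $n\times d$ matrix, and let $G=(g_1,\dots,g_e)$ be an orthonormal basis of $W$ regarded as an $m\times e$ matrix. Since $A$ and $B$ are symmetric, so are $F^TAF$ and $G^TBG$, and the hypothesis says precisely that $\Sp(F^TAF)\subset\R_-$ and $\Sp(G^TBG)\subset\R_-$.

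First I would verify that the columns of the Kronecker product $G\otimes F$ form an orthonormal basis of $V\otimes W$: these columns are the vectors $g_j\otimes f_i$, which are exactly the vectorizations $[f_ig_j^T]$ by Proposition \ref{base}, so they do span the relevant subspace with the vectorization convention used throughout. Orthonormality is immediate from the mixed-product rule for Kronecker products: $(G\otimes F)^T(G\otimes F)=(G^TG)\otimes(F^TF)=I_e\otimes I_d=I_{ed}$. Consequently, by the spectral characterization of conditional semi-definiteness, $B\otimes A$ is conditionally positive semi-definite with respect to $V\otimes W$ if and only if $\Sp\big((G\otimes F)^T(B\otimes A)(G\otimes F)\big)\subset\R_+$.

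The next step is the computation, again via the mixed-product rule,
$$(G\otimes F)^T(B\otimes A)(G\otimes F)=(G^TBG)\otimes(F^TAF).$$
Applying the proposition on the eigenvalues of a Kronecker product, the spectrum of the right-hand side is the set of products $\mu\lambda$ with $\mu\in\Sp(G^TBG)$ and $\lambda\in\Sp(F^TAF)$; since both factors are $\le 0$, every such product is $\ge 0$, hence the spectrum lies in $\R_+$, which is exactly the desired conclusion.

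The only delicate point is bookkeeping rather than mathematics: one must keep the Kronecker ordering $G\otimes F$ aligned with the tensor ordering $V\otimes W$ in a way compatible with the convention $[ab^T]=b\otimes a$, and one must be comfortable passing from the quadratic-form definition of conditional semi-definiteness on $V\otimes W$ to a statement about the spectrum of the compressed matrix $(G\otimes F)^T(B\otimes A)(G\otimes F)$. No ingredient beyond the elementary Kronecker identities already recorded (the mixed-product rule and the formula for the eigenvalues of $A\otimes B$) is needed.
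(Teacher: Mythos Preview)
Your proposal is correct and follows essentially the same route as the paper: pass to the spectral characterization via orthonormal bases $F,G$ of $V,W$, use the mixed-product rule to get $(G\otimes F)^T(B\otimes A)(G\otimes F)=(G^TBG)\otimes(F^TAF)$, and conclude from the Kronecker eigenvalue proposition that all eigenvalues are products of two nonpositive numbers. You are in fact slightly more careful than the paper about the ordering $G\otimes F$ versus $F\otimes G$ and about checking that the columns form an orthonormal basis of $V\otimes W$.
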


\begin{proof}
Let $F,G$ be orthonormal basis of $V$ and $W$, then $F \otimes G$ is an orthonormal basis of $V \otimes W$.
We have : $(F \otimes G)^T(A \otimes B) (F \otimes G) = (F^TAF) \otimes (G^TBG)$.

Then, $\Sp((F \otimes G)^T(A \otimes B) (F \otimes G)) = \lbrace \lambda \mu, \lambda \in \Sp(F^TAF), \mu \in \Sp(G^TBG) \rbrace$. 

$A,B$ are conditionally negative semi-definite with respect to $V$ and $W$ so $\Sp(F^TAF) \subset \R_-$ and $\Sp(G^TBG) \subset \R_-$. Thus, $\Sp((F \otimes G)^T(A \otimes B) (F \otimes G)) \subset \R_+$.
\end{proof}

\subsection{Negative type metric spaces}

Let's apply these results in the setting of our problem, thanks to the following lemma :

\begin{lemma}\label{decBn}
If $F = (f_1, ..., f_{n-1})$ is an orthonormal basis of $\1^{\perp}$ then $F \otimes F$ is an orthonormal basis of $[\Span(\mathcal{B}_n)]$.
\end{lemma}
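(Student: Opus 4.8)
The statement describes $[\Span(\mathcal{B}_n)]$ — the image under vectorization of the linear span of all bistochastic matrices — and claims that if $F = (f_1,\dots,f_{n-1})$ is an orthonormal basis of $\1^\perp \subset \R^n$, then the $(n-1)^2$ vectors $f_i \otimes f_j$ form an orthonormal basis of $[\Span(\mathcal{B}_n)]$. The proof breaks into three independent parts: (a) identify $\Span(\mathcal{B}_n)$ explicitly; (b) identify its vectorization; (c) check that $F \otimes F$ is an orthonormal family of the right size spanning that space. I would carry them out in that order.

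\textbf{Step (a): the span of the Birkhoff polytope.} I claim $\Span(\mathcal{B}_n) = \{ A \in \M_n(\R) : A\1 = c\1 \text{ and } \1^T A = c\1^T \text{ for some } c \in \R\}$, i.e.\ the space of matrices all of whose row sums and column sums are equal to one common value. One inclusion is immediate: every bistochastic matrix has all row and column sums equal to $1$, and this property is preserved under linear combinations (with the common value scaling accordingly). For the reverse inclusion, given such an $A$ with common sum $c$, write $A = c J/n + (A - cJ/n)$ where $J$ is the all-ones matrix; since $J/n \in \mathcal{B}_n$, it suffices to see that $A_0 := A - cJ/n$, which has all row and column sums zero, lies in $\Span(\mathcal{B}_n)$. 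This follows because for $\varepsilon > 0$ small enough $J/n \pm \varepsilon A_0$ is entrywise nonnegative and still doubly stochastic, hence in $\mathcal{B}_n$, so $A_0 = \tfrac{1}{2\varepsilon}\big((J/n + \varepsilon A_0) - (J/n - \varepsilon A_0)\big) \in \Span(\mathcal{B}_n)$. Thus $\Span(\mathcal{B}_n) = \R\cdot(J/n) \oplus \{A_0 : A_0 \1 = 0,\ \1^T A_0 = 0\}$. The second summand is exactly the set of matrices whose row space and column space lie in $\1^\perp$, i.e.\ $\1^\perp \otimes \1^\perp$ under the identification of $\M_n$ with $\R^n \otimes \R^n$; note $J/n = \tfrac1n \1\1^T$ also has the form $a b^T$ with $a,b \in \R\cdot\1$, and one checks $\R n^{-1}\1\1^T \oplus (\1^\perp\otimes\1^\perp)$ has dimension $1 + (n-1)^2$.

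\textbf{Step (b) and (c): vectorize and exhibit the basis.} By Proposition \ref{base}, $[ab^T] = b \otimes a$; since vectorization is linear and the map $a \otimes b \mapsto b \otimes a$ is just a coordinate permutation, $[\,\cdot\,]$ sends $\1^\perp \otimes \1^\perp$ (viewed inside $\M_n$) to $\1^\perp \otimes \1^\perp \subset \R^{n^2}$. Now $F = (f_1,\dots,f_{n-1})$ is an orthonormal basis of $\1^\perp$, so $\{f_i \otimes f_j\}_{i,j}$ is an orthonormal family of $(n-1)^2$ vectors (orthonormality of Kronecker products of orthonormal families is the standard identity $\langle f_i \otimes f_j, f_k \otimes f_l\rangle = \langle f_i,f_k\rangle\langle f_j,f_l\rangle = \delta_{ik}\delta_{jl}$), and it spans $\1^\perp \otimes \1^\perp$. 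It remains only to observe — and this is the one genuinely delicate point, so I would state it carefully — that in the intended application $h$ is being restricted to $\Span(\mathcal{B}_n)$ but evaluated only through its second-order behavior, and the component $\R n^{-1}\1\1^T$ contributes a term that must be accounted for separately; more precisely, one should check whether the paper intends $[\Span(\mathcal{B}_n)]$ to literally include the $\1\1^T$ direction or whether (as the convexity argument in Corollary \ref{cor} only needs $v \in \Span(K)$ translated to the origin) the relevant space is the $(n-1)^2$-dimensional $\1^\perp\otimes\1^\perp$. I would reconcile this by noting that for the Hessian criterion one tests $v^T(\Hess h)v$ for $v$ in the \emph{difference set} $\Span(\mathcal{B}_n) - \Span(\mathcal{B}_n)$ restricted to directions along which the constraint $P \in \mathcal{B}_n$ varies, and differences of bistochastic matrices have all row/column sums zero — so the $\1\1^T$ component drops out and the operative space is precisely $\1^\perp \otimes \1^\perp$, for which $F \otimes F$ is the asserted orthonormal basis. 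The main obstacle is thus not any computation but getting this bookkeeping about the affine-hull-versus-linear-span of $\mathcal{B}_n$ exactly right so that the dimension count $(n-1)^2$ and the claimed basis match.
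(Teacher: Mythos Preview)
Your analysis is more careful than the paper's own proof, and in fact you have caught a genuine slip in the paper. The paper's argument simply asserts that $\Span(\mathcal{B}_n) = \{X \in \M_n(\R) : X\1 = 0,\ X^T\1 = 0\}$ and proceeds from there; but as you correctly compute in Step~(a), the \emph{linear} span of the Birkhoff polytope is the larger space $\R\cdot(\1\1^T) \oplus \{X : X\1 = 0,\ X^T\1 = 0\}$, of dimension $(n-1)^2 + 1$. The paper is tacitly conflating the linear span with the direction space of the affine hull. Taken literally, then, the lemma as stated is false for $n\geq 2$: an orthonormal family of $(n-1)^2$ vectors cannot be a basis of an $((n-1)^2+1)$-dimensional space.

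Your reconciliation is exactly the right repair. The convexity criterion in the Proposition preceding Corollary~\ref{cor} only ever tests the Hessian against directions $v = a - b$ with $a,b \in K$, and for $K = \mathcal{B}_n$ these differences have all row and column sums zero. So the operative space for the application is $\{X : X\1 = 0,\ X^T\1 = 0\}$, whose vectorization is $\1^\perp \otimes \1^\perp$, and for \emph{that} space $F\otimes F$ is indeed an orthonormal basis by the dimension count and the orthonormality computation you give. In short: the paper's proof reaches the right conclusion for the application by writing down the wrong intermediate identity, while your proposal both identifies the discrepancy and explains why it is harmless downstream.
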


\begin{proof}

$\mathcal{B}_n = \lbrace X \in \mathcal{M}_n(\R), X \1 = \1, X^T \1 = \1 \rbrace$ so $\Span(\mathcal{B}_n) = \lbrace X \in \mathcal{M}_n(\R), X \1 = 0, X^T \1 = 0 \rbrace$. It is a vector space of dimension $(n-1)^2$. Take $i,j \in \llbracket 1,n-1 \rrbracket$, we have $f_j f_i^T \in \Span(\mathcal{B}_n)$ so according to Proposition \ref{base}, we have $f_i \otimes f_j \in [ \Span(\mathcal{B}_n) ]$ which is also a vector space of dimension $(n-1)^2$. The columns of $F \otimes F$ are exactly $f_i \otimes f_j$ for $i,j \in \llbracket 1,n-1 \rrbracket$ and

\begin{align*}
    (F\otimes F)^T(F\otimes F) &= (F^TF) \otimes (F^TF) \\
    &= I_{n-1} \otimes I_{n-1} \\
    &= I_{(n-1)^2}
\end{align*}
Thus, ${(F\otimes F)^T(F\otimes F) \in O_{(n-1)^2}(\R)}$. This proves that the columns of $F \otimes F$ form an orthonormal basis of $[\Span(\mathcal{B}_n)]$.
\end{proof}

This leads to the following definition and a corollary :

\begin{definition}\label{neg_type}
We say that $D^X \in \mathbb{M}_{\leq}^{(n)}$ is of negative type if it is conditionally negative semi-definite with respect to  $\1^{\perp}$. It is equivalent to :
$$\forall v \in \R^n, \sum_{i=1}^n v_i = 0 \implies v^T D^X v \leq 0.$$
\end{definition}

\begin{corollary}
If $D^X,D^Y \in \mathbb{M}_{\leq}^{(n)}$ are of negative type then equality (\ref{eq2}) is true.
\end{corollary}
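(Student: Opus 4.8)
The plan is to chain together the results already assembled in this section. The corollary claims that if $D^X, D^Y \in \mathbb{M}^{(n)}_{\leq}$ are of negative type, then equality \eqref{eq2} holds. Recall from the discussion following Proposition \ref{base} that $h(P) = [P]^T (D^Y \otimes D^X) [P]$ and $(\Hess h)(P) = 2\, D^Y \otimes D^X$ on $\Span(\mathcal{B}_n)$, so by Corollary \ref{cor} it suffices to show that $x^T (D^Y \otimes D^X) x \geq 0$ for every $x \in [\Span(\mathcal{B}_n)]$; this is precisely the statement that $D^Y \otimes D^X$ is conditionally positive semi-definite with respect to $[\Span(\mathcal{B}_n)]$.

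First I would invoke Definition \ref{neg_type}: $D^X$ and $D^Y$ being of negative type means exactly that each is conditionally negative semi-definite with respect to $\1^{\perp} \subset \R^n$. Next I would fix an orthonormal basis $F = (f_1, \ldots, f_{n-1})$ of $\1^{\perp}$ and apply Lemma \ref{decBn}, which identifies $F \otimes F$ as an orthonormal basis of $[\Span(\mathcal{B}_n)]$. In other words, $[\Span(\mathcal{B}_n)] = \1^{\perp} \otimes \1^{\perp}$ as subspaces of $\R^{n^2}$, equipped with the tensor-product orthonormal basis.

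Then I would apply Proposition \ref{tensor} with $V = W = \1^{\perp}$, $A = D^X$, $B = D^Y$: since $D^X$ and $D^Y$ are conditionally negative semi-definite with respect to $\1^{\perp}$, the product $D^Y \otimes D^X$ is conditionally positive semi-definite with respect to $\1^{\perp} \otimes \1^{\perp} = [\Span(\mathcal{B}_n)]$. This gives exactly the hypothesis needed to invoke Corollary \ref{cor} with $f = h$ and $K = \mathcal{B}_n$ (which is convex and compact by Theorem \ref{BVN}), yielding that $\max_{\mathcal{B}_n} h$ is attained at an extreme point, i.e. $\max_{\mathcal{B}_n} h = \max_{\operatorname{ext}(\mathcal{B}_n)} h$, which is \eqref{eq2}.

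Since every ingredient has already been proved in the excerpt, there is no real obstacle here — the only point requiring a modicum of care is bookkeeping the sign flips: ``negative type'' feeds conditionally negative semi-definite matrices into Proposition \ref{tensor}, which produces a conditionally \emph{positive} semi-definite tensor product, which is exactly the positivity of the Hessian form that Corollary \ref{cor} demands. One should also note in passing that $\Span(\mathcal{B}_n)$ is the relevant span in Corollary \ref{cor} because the affine hull of $\mathcal{B}_n$ is a translate of $\Span(\mathcal{B}_n)$, so the condition ``$v \in \Span(K)$'' in that corollary matches ``$[x] \in [\Span(\mathcal{B}_n)]$'' here.
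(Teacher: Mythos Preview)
Your argument is correct and follows essentially the same route as the paper's proof: use Definition \ref{neg_type} together with Proposition \ref{tensor} and Lemma \ref{decBn} to see that $D^Y \otimes D^X$ is conditionally positive semi-definite on $[\Span(\mathcal{B}_n)] = \1^{\perp} \otimes \1^{\perp}$, and then conclude via the convexity machinery (the paper cites Lemma \ref{key} directly, you cite its packaged form Corollary \ref{cor}; these are interchangeable here). Your extra remarks about the sign bookkeeping and the affine-versus-linear span are accurate and make the logic slightly more explicit than the paper's terse version.
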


\begin{proof}
If $D^X$ and $D^Y$ are of negative type then thanks to Propostion \ref{tensor} and Lemma \ref{decBn}, $D^Y \otimes D^X$ is conditionally positive semi-definite with respect to $\1^{\perp} \otimes \1^{\perp} = [\Span(\mathcal{B}_n)]$. According to Lemma \ref{key} we have $\max_{\mathcal{B}_n} h = \max_{\operatorname{ext}(\mathcal{B}_n)} h$.
\end{proof}

We proved that (\ref{problem}) is true at least for metric spaces of negative type. The next section is devoted to the study of these spaces.

\section{Properties of metric spaces of negative type}\label{negative type}

In this section we present results from \cite{maehara2013euclidean} about negative type metric spaces.

First Gram matrices are introduced to determine when a metric space can be embedded into an Euclidean space. From that we can prove that a metric space is of negative type if and only if some of its power transform is Euclidean. Then, we show that being Euclidean is preserved by power transform.  With all these results, we have that metric spaces with less than 4 points and Euclidean metric spaces are of negative type.

\subsection{Embedding of metric spaces into an Euclidean space}

We introduce Gram matrices in order to characterize Euclidean metric spaces.

Let's fix $(X,d)$ a metric space with $m \in \N^*$ points, $X = \lbrace x_1, ..., x_m \rbrace$.

\begin{definition}
$(X,d)$ is said to be Euclidean if there exists an embedding of $(X,d)$ into $\R^n$ for some $n \in \N$.
\end{definition}

\begin{definition}
The Gram matrix of $(X,d)$ is the $(m-1) \times (m-1)$ matrix where the $(i,j)$-element  is $\frac{1}{2}(d(x_i, x_m)^2 + d(x_j, x_m)^2 - d(x_i, x_j)^2)$. Let's denote it $\mathcal{G}(X,d)$.
\end{definition}

\begin{remark}\label{corres}
    We can recover the distances between every point from the Gram matrice of a metric space : for $i,j \in \llbracket 1,n \rrbracket$, $d^2(x_m,x_i) = \mathcal{G}(X,d)_{i,i}$ and \mbox{$d^2(x_i,x_j) = \mathcal{G}(X,d)_{i,i} + \mathcal{G}(X,d)_{j,j} - 2 \mathcal{G}(X,d)_{i,j} $}.
\end{remark}

\begin{theorem}\label{embed0}
$(X,d)$ is embeddable in $\R^n$ if and only if $\mathcal{G}(X,d)$ is positive semidefinite with rank less than or equal to $n$.
\end{theorem}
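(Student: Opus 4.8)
The plan is to prove both directions by relating the Gram matrix to an explicit choice of coordinates. The key idea is that the Gram matrix is precisely the matrix of inner products of the vectors $v_i = x_i - x_m$ (for $i \in \llbracket 1, m-1 \rrbracket$) once we have embedded the space, since for any points $p, q, r$ in a Euclidean space, $\langle p - r, q - r \rangle = \frac{1}{2}(\|p-r\|^2 + \|q-r\|^2 - \|p-q\|^2)$, which is exactly the polarization identity. So the whole statement should reduce to the standard fact that a symmetric matrix $G$ is the Gram matrix of some family of vectors in $\R^n$ if and only if $G$ is positive semidefinite with $\rank G \leq n$.

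For the forward direction, I would assume $(X,d)$ embeds isometrically in $\R^n$ via $x_i \mapsto p_i$. Setting $v_i = p_i - p_m$ for $i \in \llbracket 1, m-1 \rrbracket$, a direct computation using the polarization identity shows that $\mathcal{G}(X,d)_{i,j} = \langle v_i, v_j \rangle$. Hence $\mathcal{G}(X,d) = V^T V$ where $V$ is the $n \times (m-1)$ matrix whose columns are the $v_i$; this is manifestly positive semidefinite, and its rank equals $\rank V \leq n$ since $V$ has $n$ rows.

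For the converse, suppose $\mathcal{G}(X,d)$ is positive semidefinite with rank $r \leq n$. By the spectral theorem (or Cholesky-type factorization) we can write $\mathcal{G}(X,d) = V^T V$ for some $r \times (m-1)$ matrix $V$; padding with zero rows, we may take $V$ to be $n \times (m-1)$. Let $v_1, \dots, v_{m-1}$ be its columns and set $p_m = 0$, $p_i = v_i$ for $i \in \llbracket 1, m-1 \rrbracket$. Then I must check that $\|p_i - p_j\| = d(x_i, x_j)$ for all $i, j$, which is exactly the content of Remark \ref{corres}: $\|p_i - p_m\|^2 = \|v_i\|^2 = \mathcal{G}(X,d)_{i,i} = d(x_i,x_m)^2$, and $\|p_i - p_j\|^2 = \|v_i\|^2 + \|v_j\|^2 - 2\langle v_i, v_j\rangle = \mathcal{G}(X,d)_{i,i} + \mathcal{G}(X,d)_{j,j} - 2\mathcal{G}(X,d)_{i,j} = d(x_i,x_j)^2$. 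This gives the desired embedding into $\R^n$.

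I do not anticipate a serious obstacle here; the proof is essentially bookkeeping around the polarization identity and the factorization of a positive semidefinite matrix. The one point demanding a little care is making sure the rank condition is tracked faithfully in both directions — that $\rank \mathcal{G}(X,d) = \rank V$ and that an embedding exists into $\R^n$ exactly when this common rank is at most $n$ (one can always realize $V$ with exactly $\rank \mathcal{G}(X,d)$ rows and then embed that copy of $\R^{\rank \mathcal{G}(X,d)}$ into $\R^n$). It is also worth noting implicitly that $\mathcal{G}(X,d)$ being symmetric is automatic from its definition, so the spectral theorem applies.
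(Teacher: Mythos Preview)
Your proposal is correct and follows essentially the same route as the paper: both directions hinge on the polarization identity to recognise $\mathcal{G}(X,d)$ as the matrix of inner products $\langle p_i - p_m, p_j - p_m\rangle$, and the converse is obtained by factoring the positive semidefinite matrix as $V^T V$ and reading off the embedding from the columns of $V$. The only cosmetic difference is that the paper invokes Cholesky and then argues via the dimension of the span of the columns, whereas you factor directly with $r$ rows and pad with zeros; your handling of the rank condition is in fact slightly tidier.
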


\begin{proof}

\begin{itemize}
    \item If $(X,d)$ is Euclidean, there exists $n \in \N$ and $y_1, ..., y_m \in \R^n$ that verifies for ${i,j \in \llbracket 1,m \rrbracket}, d(x_i,x_j) = \|y_i - y_j\|$.
    
    For $i,j \in \llbracket 1,m-1 \rrbracket$, by the polarization identity we have :
    \begin{align*}
        \mathcal{G}(X,d)_{i,j} &= \frac{1}{2}( \|y_i-y_m\|^2 + \|y_j-y_m\|^2 - \|y_i-y_j\|^2) \\
        &= \langle y_i-y_m, y_j-y_m \rangle.
    \end{align*}
    We denote $Z$ the $(m-1) \times (m-1)$ matrix such that the $i$-th column is the vector $y_i-y_m$. Then we have $\mathcal{G}(X,d) = Z^T Z$.
    
    Let $v \in \R^{m-1}$, 
    \begin{align*}
        v^T \mathcal{G}(X,d) v &= (Zv)^T Zv \\
                              &= \|Zv\|^2 \\
                              &\geq 0. 
    \end{align*}
    
    So $\mathcal{G}(X,d)$ is positive semidefinite.
    Moreover, $(X,d)$ is embedabble into $\R^n$ so :
    
    \begin{align*}
        \rank(\mathcal{G}(X,d)) &= \rank(Z) \\
                                &= \dim( \Span( \lbrace y_i - y_m : i \in \llbracket 1,m-1 \rrbracket \rbrace)) \\
                                &\leq n.
    \end{align*}
    
    \item If $\mathcal{G}(X,d)$ is positive semidefinite with rank at most $n$, by Cholesky decomposition there exists a lower triangular matrix such that $\mathcal{G}(X,d) = L^T L$. Then $\rank(L) = \rank(\mathcal{G}(X,d)) \leq n$. Thus if we denote $L = (y_1, ..., y_{m-1})$ with $y_1, ..., y_{m-1} \in \R^{m-1}$, we have $\dim(\Span(\lbrace y_1, ..., y_{m-1} \rbrace)) \leq n$. 
    
    So, for $i,j \in \llbracket 1,m-1 \rrbracket$, $\mathcal{G}(X,d)_{ij} = (L^T L)_{ij} = y_i^T y_j = \langle y_i, y_j \rangle$.
    
    Then, 
    \begin{align*}
        \|y_i - y_j \|^2 &= \|y_i\|^2 - 2 \langle y_i, y_j \rangle + \|y_j\|^2 \\
        &= d(x_i, x_m)^2 - (d(x_i, x_m)^2 + d(x_j, x_m)^2 - d(x_i, x_j)^2) + d(x_j, x_m)^2 \\
        &= d(x_i, x_j)^2.
    \end{align*}
    Hence, the application $x_i \mapsto y_i$ is an isometry and we have an embdedding of $X$ into $\R^m$.

\end{itemize}

\end{proof}

The immediate corollary is less precise but it will be sufficient for our purpose.

\begin{corollary}\label{embed}
    $(X,d)$ is Euclidean if and only if $\mathcal{G}(X,d)$ is positive semi-definite.
\end{corollary}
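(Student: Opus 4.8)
The plan is to derive the corollary directly from Theorem \ref{embed0}, observing that ``Euclidean'' is, by definition, the assertion that $(X,d)$ embeds into $\R^n$ for \emph{some} $n \in \N$, while Theorem \ref{embed0} characterizes embeddability into $\R^n$ for each fixed $n$. So the corollary is obtained simply by quantifying Theorem \ref{embed0} over the ambient dimension and discarding the rank bookkeeping.

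First I would handle the forward implication. Suppose $(X,d)$ is Euclidean. By definition there exists $n \in \N$ with $(X,d)$ embeddable into $\R^n$, so Theorem \ref{embed0} immediately yields that $\mathcal{G}(X,d)$ is positive semidefinite (it is moreover of rank at most $n$, but this refinement is not needed here).

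Conversely, assume $\mathcal{G}(X,d)$ is positive semidefinite. Let $r = \rank(\mathcal{G}(X,d))$, so that $r \leq m-1$. Then $\mathcal{G}(X,d)$ is positive semidefinite of rank at most $r$, and Theorem \ref{embed0} applied with $n = r$ shows that $(X,d)$ is embeddable into $\R^r$; in particular it embeds into a Euclidean space, so $(X,d)$ is Euclidean. One could equally well take $n = m-1$ throughout and avoid even naming the rank.

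There is no genuine obstacle in this argument: all the work is already contained in Theorem \ref{embed0}, and the corollary merely restates it in a coordinate-free, dimension-agnostic form. The only point requiring any care is to remember that the equivalence in the corollary holds because ``positive semidefinite'' is exactly the hypothesis of Theorem \ref{embed0} once the rank constraint is relaxed, which is always possible by choosing the target dimension to be $m-1$ (or the rank of the Gram matrix).
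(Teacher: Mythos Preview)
Your argument is correct and is exactly the ``immediate'' derivation the paper has in mind: the corollary is simply Theorem~\ref{embed0} with the rank condition dropped by taking $n=m-1$ (or $n=\rank\mathcal{G}(X,d)$). The paper gives no separate proof beyond calling it an immediate corollary, so there is nothing to compare.
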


\subsection{Power transform of a metric space}
Here we study what happens if we put all the distances in a metric space to the same power $c$. When $0<c \leq 1$ , the triangle inequality is preserved (i.e. we still have a metric space) and we will see later that it is as well the property of being Euclidean is also preserved (Theorem \ref{power_euclidean}).

\begin{lemma}\label{ineq_power}
    If $p + q \geq r$ are positive and $0 < c < 1$, then $p^c + q^c > r^c$.
\end{lemma}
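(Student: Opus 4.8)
The plan is to reduce the inequality $p^c + q^c > r^c$ (for $p+q \ge r > 0$ and $0 < c < 1$) to the single-variable statement that the function $t \mapsto t^c$ is strictly subadditive on the positive reals, and then prove that statement directly. Since $r \le p+q$ and $t \mapsto t^c$ is increasing on $(0,\infty)$, it suffices to show $p^c + q^c > (p+q)^c$ for all $p, q > 0$; the desired inequality then follows because $r^c \le (p+q)^c < p^c + q^c$.

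First I would dispose of the degenerate case: if one of $p, q$ is zero the statement is immediate (and with the paper's hypothesis $p, q$ are ``positive'', so this case may not even arise, but it costs nothing to note). For the main case $p, q > 0$, set $\lambda = p/(p+q) \in (0,1)$, so that $q/(p+q) = 1-\lambda$. Dividing the target inequality $p^c + q^c > (p+q)^c$ by $(p+q)^c$ turns it into $\lambda^c + (1-\lambda)^c > 1$. Now I would use strict concavity of $x \mapsto x^c$ on $[0,1]$ (its second derivative $c(c-1)x^{c-2}$ is strictly negative since $0 < c < 1$), together with the values $0^c = 0$ and $1^c = 1$: for any $x \in (0,1)$ we have $x^c = x\cdot 1^c + (1-x)\cdot 0^c < (x\cdot 1 + (1-x)\cdot 0)^c$ is the wrong direction — instead, concavity gives $x^c > x$ for $x \in (0,1)$ because the chord from $(0,0)$ to $(1,1)$ lies below the graph. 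Applying this twice, $\lambda^c > \lambda$ and $(1-\lambda)^c > 1-\lambda$, hence $\lambda^c + (1-\lambda)^c > \lambda + (1-\lambda) = 1$, which is exactly what was needed.

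Alternatively — and perhaps more cleanly for a write-up — I would avoid $\lambda$ and argue: without loss of generality $p \le q$, write $(p+q)^c = q^c (1 + p/q)^c$ and use the elementary inequality $(1+t)^c < 1 + t^c$ for $t \in (0,1]$, which again follows from concavity of $x \mapsto x^c$ applied at the points $1$ and $1+t$ (the slope of $x^c$ is decreasing, so $(1+t)^c - 1^c < t \cdot [\text{slope at }1] \le$ ... ), or most simply by noting $h(t) = 1 + t^c - (1+t)^c$ satisfies $h(0) = 0$ and $h'(t) = c t^{c-1} - c(1+t)^{c-1} > 0$ since $t^{c-1} > (1+t)^{c-1}$ when $c - 1 < 0$. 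Then $(p+q)^c = q^c(1+p/q)^c < q^c(1 + (p/q)^c) = q^c + p^c$, and since $r \le p+q$ we conclude $r^c \le (p+q)^c < p^c + q^c$.

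The only mild subtlety — not really an obstacle — is making sure the strictness is genuine: it comes from $0 < c < 1$ being strict (so the concavity/monotonicity of the derivative is strict) and from $p, q$ being strictly positive (so the strict chord inequality applies at an interior point). I expect the ``hard part'' to be purely cosmetic: choosing whichever of the two routes above gives the shortest self-contained argument, since both ultimately rest on the single fact that $t \mapsto t^{c-1}$ is strictly decreasing on $(0,\infty)$ when $c < 1$.
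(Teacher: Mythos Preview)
Your proposal is correct and takes essentially the same approach as the paper: both reduce to showing $p^c + q^c > (p+q)^c$ via the strict concavity of $t \mapsto t^c$ on $(0,\infty)$, then use monotonicity to pass from $p+q$ to $r$. The paper invokes concavity through the slope (secant) inequality $\frac{f(p)-f(0)}{p} > \frac{f(p+q)-f(q)}{p}$, while you invoke it through the chord inequality $x^c > x$ on $(0,1)$ or the decreasing derivative $t^{c-1}$; these are equivalent formulations of the same fact.
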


\begin{proof}
    Let $f(x) = x^c$, it is strictly concave because $0<c<1$. By slope inequality we have $\frac{f(p) - f(0)}{p} > \frac{f(p+q) - f(q)}{p}$. $f(0) = 0$, $f$ is increasing and $p+q \geq r$ so $f(p) + f(q) > f(p+q) \geq f(r)$ so $p^c + q^c > r^c$.
\end{proof}

This lemma proves that the power transform of a metric space is a metric space.

\begin{corollary}
    If $(X,d)$ is a metric space and $0<c \leq 1$, then $(X,d^c)$ is a metric space.
\end{corollary}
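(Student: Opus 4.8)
The plan is simply to check the metric axioms for $(X,d^c)$, observing that everything except the triangle inequality transfers trivially from $(X,d)$ because $t \mapsto t^c$ is a nonnegative, strictly increasing function on $[0,+\infty)$ that vanishes only at $0$. Indeed $d^c \geq 0$ since $d \geq 0$ and $c>0$; symmetry $d^c(x,y) = d^c(y,x)$ follows from symmetry of $d$; and $d^c(x,y) = 0 \iff d(x,y) = 0 \iff x = y$. So the whole content is the triangle inequality.

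For the triangle inequality, fix $x,y,z \in X$ and set $p = d(x,z)$, $q = d(z,y)$, $r = d(x,y)$. The triangle inequality for $d$ gives $p+q \geq r$, and all three are nonnegative. If $c = 1$ there is nothing to prove. If $0 < c < 1$ and $p,q > 0$, then Lemma \ref{ineq_power} applies directly and yields $d^c(x,z) + d^c(z,y) = p^c + q^c > r^c = d^c(x,y)$. If one of $p,q$ vanishes, say $p = 0$, then $x = z$ (or $d(x,\cdot) = d(z,\cdot)$ in the pseudo-metric case), so $d^c(x,y) = d^c(z,y) = d^c(x,z) + d^c(z,y)$ and the inequality holds with equality; the case $q = 0$ is symmetric.

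There is no real obstacle: the statement is an immediate corollary of Lemma \ref{ineq_power}, the only point of care being the degenerate cases $p = 0$ or $q = 0$ which are outside the hypotheses of Lemma \ref{ineq_power} but are settled by the identity of indiscernibles. One may also note in passing that the same argument works verbatim for pseudo-metric spaces, which is the setting the paper actually uses.
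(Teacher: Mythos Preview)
Your proof is correct and follows exactly the paper's approach: the corollary is deduced directly from Lemma \ref{ineq_power}, which supplies the triangle inequality, while the other axioms are immediate. You are in fact more careful than the paper, which simply states that the lemma proves the corollary without spelling out the $c=1$ case or the degenerate cases $p=0$ or $q=0$.
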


\subsection{Link between Euclidean spaces and metric spaces of negative type}

Let $(X,d)$ be a finite metric space with $m \in \N^{*}$ points, $X = \lbrace x_1, ..., x_m \rbrace$.

The following theorem shows a link between Euclidean spaces and negative type spaces. This equivalence allows us to work in Euclidean spaces, which are more concrete than negative type spaces. It results from simple computations.

\begin{theorem}
$(X,d^{\frac{1}{2}})$ is Euclidean if and only if $(X,d)$ is of negative type.
\end{theorem}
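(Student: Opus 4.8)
The plan is to unwind the definitions on both sides and reduce everything to a single algebraic identity relating the Gram matrix to a quadratic form on $\1^\perp$. Write $d' = d^{1/2}$, so that $d'(x_i,x_j)^2 = d(x_i,x_j)$. By Corollary~\ref{embed}, $(X,d')$ is Euclidean if and only if $\mathcal{G}(X,d')$ is positive semi-definite; and by Definition~\ref{neg_type}, $(X,d)$ is of negative type if and only if $v^T D v \le 0$ for every $v \in \R^m$ with $\sum_i v_i = 0$, where $D = (d(x_i,x_j))_{i,j}$ is the distance matrix. So the theorem amounts to the equivalence
$$
\mathcal{G}(X,d') \succeq 0 \quad \Longleftrightarrow \quad \bigl(\,\forall v \in \1^\perp,\ v^T D v \le 0\,\bigr).
$$

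The key computation is to express $v^T D v$ for $v \in \1^\perp$ in terms of $\mathcal{G}(X,d')$. First I would recall from Remark~\ref{corres} (applied to $d'$, whose square is $d$) that $D_{ii} = 0$, $D_{im} = \mathcal{G}(X,d')_{ii}$ for $i < m$, and $D_{ij} = \mathcal{G}(X,d')_{ii} + \mathcal{G}(X,d')_{jj} - 2\mathcal{G}(X,d')_{ij}$ for $i,j < m$. Given $v = (v_1,\dots,v_m) \in \R^m$ with $\sum_{i=1}^m v_i = 0$, set $w = (v_1,\dots,v_{m-1}) \in \R^{m-1}$, so that $v_m = -\sum_{i=1}^{m-1} w_i = -\1^T w$. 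Substituting the formulas for $D_{ij}$ into $v^T D v = \sum_{i,j=1}^m v_i v_j D_{ij}$ and carefully collecting terms — splitting off the index $m$ in both sums and using $v_m = -\1^T w$ — the diagonal contributions $\mathcal{G}(X,d')_{ii}$ cancel against the cross terms involving $D_{im}$, and what survives is exactly
$$
v^T D v = -2\, w^T \mathcal{G}(X,d')\, w.
$$
This is the heart of the proof. Once it is established, the equivalence is immediate: as $v$ ranges over $\1^\perp$, the vector $w$ ranges over all of $\R^{m-1}$, so $v^T D v \le 0$ for all $v \in \1^\perp$ is equivalent to $w^T \mathcal{G}(X,d') w \ge 0$ for all $w \in \R^{m-1}$, i.e.\ to $\mathcal{G}(X,d') \succeq 0$, which by Corollary~\ref{embed} is equivalent to $(X,d') = (X,d^{1/2})$ being Euclidean.

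The main obstacle is purely the bookkeeping in the displayed identity $v^T D v = -2 w^T \mathcal{G}(X,d') w$: one has to handle the $i=m$ and $j=m$ rows/columns separately, remember that $D_{mm} = 0$, and verify that all the squared-norm-to-the-origin terms $\mathcal{G}_{ii}$ coming from the $D_{ij}$ expansion telescope away against those coming from the $D_{im}$ and $D_{mj}$ terms after using $\sum v_i = 0$. There is no conceptual difficulty — it is the standard ``double centering'' computation linking a distance matrix to a Gram matrix — but it must be done with care about which index is the distinguished base point $x_m$. Everything else (the two ``if and only if'' reformulations) is just quoting Corollary~\ref{embed}, Remark~\ref{corres}, and Definition~\ref{neg_type}.
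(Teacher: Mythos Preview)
Your proposal is correct and follows essentially the same route as the paper: the identity $v^T D v = -2\, w^T \mathcal{G}(X,d^{1/2})\, w$ for $v \in \1^\perp$ is exactly the computation the paper carries out for the backward implication. The only difference is that you use this single identity for both directions, whereas the paper proves the forward implication separately by picking an explicit Euclidean embedding $y_1,\dots,y_m$ and expanding $\|y_i-y_j\|^2$; your version is slightly more streamlined but not substantively different.
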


\begin{proof}

\begin{itemize}
    \item If $(X,d^{\frac{1}{2}})$ is of Euclidean there exists $n \in \N$ and $y_1, ..., y_m \in \R^n$ such that \newline for  $i,j \in \llbracket 1,m \rrbracket, d^{\frac{1}{2}}(x_i,x_j) = \|y_i - y_j \|_2$. Let's denote $D$ the distance matrix of $(X,d)$ and let $\eta \in \1^{\perp}$ :
\begin{align*}
        \eta^T D \eta &= \sum_{i,j=1}^m \eta_i \eta_j ||y_i - y_j||_2^2 \\
                      &= \underbrace{\left( \sum_{i=1}^m \eta_i \right)}_{= 0} \left( \sum_{j=1}^m \eta_j ||y_j||_2^2 \right) + \underbrace{\left( \sum_{j=1}^m \eta_j \right)}_{= 0} \left( \sum_{i=1}^m \eta_i ||y_i||_2^2 \right) - 2 \sum_{i,j=1}^m \eta_i \eta_j < y_i, y_j > \\
                      &= -2 \left\lVert \sum_{i=1}^m \eta_i y_i \right\rVert^2_2 \\
                      &\leq 0.
\end{align*}
So $(X,d)$ is of negative type.
    \item Suppose that $(X,d)$ is of negative type. Let $v \in \R^{m-1}$. Let $\tilde{v} = \left( \tilde{v}_1, ..., \tilde{v}_{m-1}, \tilde{v}_m\right) \in \R^{m} $ such that \mbox{for $1\leq i \leq m-1, \tilde{v}_i = v_i$} and $\tilde{v}_{m} = - \sum_{i=1}^{m-1} v_i$, so we have $\tilde{v} \in \1^{\perp}$. According to {Remark \ref{corres}} we have :
    \begin{align*}
        \sum_{i,j=1}^{m} \tilde{v}_i \tilde{v}_j d(x_i,x_j) &= \sum_{j=1}^{m-1} \tilde{v}_m v_j d(x_m, x_j) + \sum_{i=1}^{m-1} v_i \tilde{v}_m d(x_i, x_m) + \sum_{i,j=1}^{m-1} v_i v_j d(x_i, x_j) \\
        &= 2 \tilde{v}_m \sum_{j=1}^{m-1} v_j \mathcal{G}(X,d^{\frac{1}{2}})_{j,j} + \sum_{i,j=1}^{m-1} v_i v_j \left( \mathcal{G}(X,d^{\frac{1}{2}})_{i,i} + \mathcal{G}(X,d^{\frac{1}{2}})_{j,j} - 2 \mathcal{G}(X,d^{\frac{1}{2}})_{i,j} \right) \\
        &= 2 \left( \sum_{i=1}^m \tilde{v}_i \right) \left( \sum_{j=1}^{m-1} v_j \mathcal{G}(X,d^{\frac{1}{2}})_{j,j} \right) - 2 \sum_{i,j=1}^{m-1} v_i v_j \mathcal{G}(X,d)_{i,j} \\
        &= - 2 \sum_{i,j=1}^{m-1} v_i v_j \mathcal{G}(X,d^{\frac{1}{2}})_{i,j}.
    \end{align*}
    $(X,d)$ is of negative type so $\mathcal{G}(X,d^{\frac{1}{2}})$ is positive semi-definite, thus according to Corollary \ref{embed} $(X,d^{\frac{1}{2}})$ is Euclidean.
\end{itemize}

\end{proof}

\begin{remark}\label{rem}
    If $(X,d)$ is a metric space, $(X,d^2)$ may not be a metric space, but the proof still shows that $(X,d)$ is Euclidean if and only if for all $\eta \in \1^{\perp}, \sum_{i,j=1}^m \eta_i \eta_j d(x_i,x_j)^2 \leq 0$.
\end{remark}

\subsection{Further properties of power transform metric spaces and Euclidean metric spaces}

We use together the results of the last three subsections in order to see how these notions combine with each other. These results are discussed in the following articles \cite{bogomolny2007distance, maron2018probably, schoenberg1937, Schoenberg1938}.

With Remark \ref{rem} we add another characterization of Euclidean metric spaces to Corollary \ref{embed}, using some computational tricks.

\begin{theorem}
The following statements are equivalent :
\begin{enumerate}
    \item $(X,d)$ is Euclidean
    \item $\mathcal{G}(X,d)$ is positive semi-definite
    \item $\forall \eta \in \1^{\perp}, \sum_{i,j=1}^n \eta_i \eta_j d(x_i,x_j)^2 \leq 0$
    \item $\forall \eta \in \1^{\perp}, \forall \lambda > 0, \sum_{i,j=1}^n e^{-\lambda d(x_i,x_j)^2} \eta_i \eta_j \geq 0 $.
\end{enumerate}
\end{theorem}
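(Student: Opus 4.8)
The plan is to assemble this four-way equivalence from ingredients that are almost entirely in place already. Statements (1) and (2) are equivalent by Corollary \ref{embed}, and statements (1) and (3) are equivalent by Remark \ref{rem} --- the content of that remark being exactly that the computation of the previous theorem, applied to the matrix $\bigl(d(x_i,x_j)^2\bigr)_{i,j}$, goes through whether or not $(X,d^2)$ happens to be a metric space. So the only real work is to splice statement (4) into the chain, which I would do by proving $(1)\Rightarrow(4)$ and $(4)\Rightarrow(3)$; together with $(1)\Leftrightarrow(2)$ this closes the cycle.

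For the easy implication $(4)\Rightarrow(3)$, I would use a first-order limiting argument. Fix $\eta \in \1^{\perp}$, so that $\sum_{i,j=1}^n \eta_i\eta_j = \bigl(\sum_{i=1}^n \eta_i\bigr)^2 = 0$; then for every $\lambda > 0$ one has
\[
\frac{1}{\lambda}\sum_{i,j=1}^n \eta_i\eta_j\bigl(1 - e^{-\lambda d(x_i,x_j)^2}\bigr) \;=\; -\,\frac{1}{\lambda}\sum_{i,j=1}^n \eta_i\eta_j\, e^{-\lambda d(x_i,x_j)^2} \;\le\; 0
\]
by hypothesis (4). Letting $\lambda \to 0^{+}$ and using $(1-e^{-\lambda t})/\lambda \to t$ termwise on this finite sum gives $\sum_{i,j=1}^n \eta_i\eta_j\, d(x_i,x_j)^2 \le 0$, which is (3).

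For $(1)\Rightarrow(4)$, I would argue with an explicit Euclidean embedding, reproducing the finite-dimensional case of Schoenberg's theorem. Choose $y_1,\dots,y_n \in \R^N$ with $d(x_i,x_j) = \|y_i - y_j\|$; expanding the square,
\[
e^{-\lambda d(x_i,x_j)^2} \;=\; e^{-\lambda\|y_i\|^2}\, e^{2\lambda\langle y_i,y_j\rangle}\, e^{-\lambda\|y_j\|^2},
\]
so the matrix $\bigl(e^{-\lambda d(x_i,x_j)^2}\bigr)_{i,j}$ is congruent, via the diagonal matrix $\operatorname{diag}\bigl(e^{-\lambda\|y_i\|^2}\bigr)$, to $\bigl(e^{2\lambda\langle y_i,y_j\rangle}\bigr)_{i,j}$; since congruence preserves positive semi-definiteness it is enough to show the latter is positive semi-definite. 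Now $\bigl(\langle y_i,y_j\rangle\bigr)_{i,j}$ is a Gram matrix, hence positive semi-definite, and expanding $e^{2\lambda\langle y_i,y_j\rangle} = \sum_{k\ge 0}\frac{(2\lambda)^k}{k!}\langle y_i,y_j\rangle^{k}$ exhibits $\bigl(e^{2\lambda\langle y_i,y_j\rangle}\bigr)_{i,j}$ as a nonnegative combination of entrywise powers of a positive semi-definite matrix. Using that the entrywise product of positive semi-definite matrices is positive semi-definite --- which follows from the Kronecker product material of Section \ref{proof}, the entrywise product being a principal submatrix of the Kronecker product and the eigenvalues of a Kronecker product being the products of the eigenvalues of its factors --- each such power is positive semi-definite, hence so is the sum. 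Therefore $\bigl(e^{-\lambda d(x_i,x_j)^2}\bigr)_{i,j}$ is positive semi-definite, so $\sum_{i,j=1}^n e^{-\lambda d(x_i,x_j)^2}\eta_i\eta_j \ge 0$ for every $\eta$, a fortiori for $\eta \in \1^{\perp}$, giving (4).

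The only genuinely new step is $(1)\Rightarrow(4)$, and within it the point to get right is the Schur product theorem; I expect the one mild obstacle to be formulating it so that it rests on the Kronecker product facts already introduced rather than being invoked as a black box. Everything else is bookkeeping between Corollary \ref{embed}, Remark \ref{rem}, and the two implications above.
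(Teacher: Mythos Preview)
Your proof is correct, and the overall scaffolding matches the paper's: $(1)\Leftrightarrow(2)\Leftrightarrow(3)$ is taken as known, and the new content is $(1)\Rightarrow(4)$ and $(4)\Rightarrow(3)$. But both of those implications are argued quite differently. For $(1)\Rightarrow(4)$ the paper does not use the Schur product theorem; instead it writes the Gaussian as a Fourier integral,
\[
e^{-\lambda\|z\|^2} = \frac{1}{(4\pi)^{m/2}}\int_{\R^m} e^{i\langle\xi,\sqrt{\lambda}\,z\rangle}\,e^{-\|\xi\|^2/4}\,d\xi,
\]
and then $\sum_{j,k}\eta_j\eta_k\,e^{-\lambda\|x_j-x_k\|^2}$ becomes an integral of $\bigl|\sum_j \eta_j e^{i\sqrt{\lambda}\langle\xi,x_j\rangle}\bigr|^2$ against a positive weight. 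For $(4)\Rightarrow(3)$ the paper does not differentiate at $\lambda=0$; it uses the integral representation
\[
t^{\gamma} = \frac{1}{I}\int_0^\infty \bigl(1-e^{-t^2 x^2}\bigr)\,x^{-1-\gamma}\,dx,\qquad 0<\gamma<2,
\]
to get $\sum_{i,j}\eta_i\eta_j\,d(x_i,x_j)^\gamma\le 0$ for all $\gamma<2$, then lets $\gamma\to 2$. Your arguments are cleaner and more self-contained for this theorem alone --- in particular your derivative-at-zero route to $(4)\Rightarrow(3)$ is shorter, and your Schur-product route to $(1)\Rightarrow(4)$ nicely reuses the Kronecker facts from Section~\ref{proof}. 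The trade-off is that the paper's integral formula for $t^\gamma$ is exactly what is invoked (``with the same notations'') in the very next result, Theorem~\ref{power_euclidean}, to show that power transforms preserve being Euclidean; if you adopt your proof here, that machinery still has to be set up separately there.
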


\begin{proof}
We already proved $(1) \iff (2) \iff (3)$.

If $(X,d)$ is Euclidean, suppose $X \subset \R^m$ for some $m \in \N$.

By inverse Fourier transform of a Gaussian , for $z \in \R^m$ and $\lambda>0$, we have : $$e^{-\lambda^2 ||z||_2^2} = \frac{1}{(4\pi)^{\frac{m}{2}}} \int_{\R^m} e^{i <\xi,\lambda z>} e^{- ||\xi||_2^2} d\xi. $$

So for $\eta \in \1^{\perp}$,
\begin{align*}
    \sum_{j,k=1}^n \eta_j \eta_k e^{- \lambda^2 \|x_j-x_k \|^2} &= \frac{1}{(4\pi)^{\frac{m}{2}}} \int_{\R^m} \sum_{j,k=1}^n e^{i <\xi, \lambda (x_j-x_k)>} e^{-||\xi||_2^2} d\xi \\
    &= \frac{1}{(4\pi)^{\frac{m}{2}}} \int_{\R^m} \sum_{j,k=1}^n e^{i \lambda <\xi, x_j>} \overline{e^{i \lambda <\xi, x_k>}} e^{-||\xi||_2^2} d\xi \\
    &= \frac{1}{(4\pi)^{\frac{m}{2}}} \int_{\R^m}  \left| \sum_{j=1}^n e^{i \lambda <\xi, x_j>} \right|^2 e^{-||\xi||_2^2} d\xi \\
    &\geq 0.
\end{align*}
Replace $\lambda^2$ by $\lambda$ to obtain (4).

Reciprocally suppose we have $(4)$.

Let $0 < \gamma < 2$, and  $I = \int_0^{+\infty} (1-e^{-x^2}) x^{-1-\gamma} dx$ (well-defined).

For $t>0$ by change of variables we have $t^{\gamma} = \frac{1}{I} \int_0^{+\infty} (1-e^{-t^2x^2}) x^{-1-\gamma} dx $.
Then we have for $\eta \in \1^{\perp}$:
\begin{align*}
    \sum_{i,j=1}^n \eta_i \eta_j d(x_i,x_j)^\gamma &= \sum_{i,j=1}^n \eta_i \eta_j \frac{1}{I} \int_0^{+\infty} (1-e^{-d(x_i,x_j)^2x^2}) x^{-1-\gamma} dx \\
    &= - \frac{1}{I}   \int_0^{+\infty} \left( \sum_{i,j=1}^n \eta_i \eta_j e^{-d(x_i,x_j)^2x^2} \right) x^{-1-\gamma} dx \\
    &\leq 0.
\end{align*}

It is true for all $0 < \gamma < 2$ so by continuity of the left hand side with respect to $\gamma$ it also true for $\gamma = 2$ so
$$\sum_{i,j=1}^n \eta_i \eta_j d(x_i,x_j)^2 \leq 0 .$$
So we have $(2)$.

\end{proof}

With the same trick we prove that being Euclidean is preserved through power transform.

\begin{theorem}\label{power_euclidean}
    If $(X,d)$ is an Euclidean metric space then for $0<c \leq 1$, $(X,d^c)$ is Euclidean
\end{theorem}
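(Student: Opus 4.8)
The plan is to reduce the claim to the exponential characterization of Euclidean spaces established in the previous theorem, exactly as was done there for the passage from $(4)$ to $(2)$ with exponent $\gamma$. Concretely, I would use the same integral representation of the power function: for $0<\gamma<2$ and $t>0$,
\begin{equation*}
    t^{\gamma} = \frac{1}{I}\int_0^{+\infty}\bigl(1-e^{-t^2x^2}\bigr)\,x^{-1-\gamma}\dd x,
    \qquad I=\int_0^{+\infty}\bigl(1-e^{-x^2}\bigr)x^{-1-\gamma}\dd x .
\end{equation*}
Applying this with $t=d(x_i,x_j)$ and $\gamma=2c$ (legitimate since $0<c\le1$ gives $0<2c\le2$) expresses $d(x_i,x_j)^{2c}$ as a positive integral mixture of the quantities $1-e^{-d(x_i,x_j)^2x^2}$.

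First I would record that, since $(X,d)$ is Euclidean, by the equivalence $(1)\Leftrightarrow(4)$ of the last theorem we have $\sum_{i,j}\eta_i\eta_j e^{-\lambda d(x_i,x_j)^2}\ge0$ for every $\eta\in\1^{\perp}$ and every $\lambda>0$. Then, for fixed $\eta\in\1^{\perp}$ and using $\sum_i\eta_i=0$ to drop the constant term,
\begin{align*}
    \sum_{i,j=1}^m \eta_i\eta_j\, d(x_i,x_j)^{2c}
    &= \frac{1}{I}\int_0^{+\infty}\sum_{i,j}\eta_i\eta_j\bigl(1-e^{-d(x_i,x_j)^2x^2}\bigr)x^{-1-2c}\dd x\\
    &= -\frac{1}{I}\int_0^{+\infty}\Bigl(\sum_{i,j}\eta_i\eta_j e^{-d(x_i,x_j)^2x^2}\Bigr)x^{-1-2c}\dd x\;\le\;0,
\end{align*}
because the bracketed sum is nonnegative for each $x>0$ and $I>0$. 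Thus the metric space $(X,d^c)$ satisfies $\sum_{i,j}\eta_i\eta_j\,(d^c(x_i,x_j))^2\le0$ for all $\eta\in\1^{\perp}$. Since $(X,d^c)$ is a genuine metric space by the corollary to Lemma \ref{ineq_power}, Remark \ref{rem} (equivalently condition $(3)$ of the last theorem, together with Corollary \ref{embed}) now gives that $(X,d^c)$ is Euclidean.

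The only points needing a little care — none of them a real obstacle — are: checking that $I$ is finite and positive and that the change of variables giving the representation of $t^{\gamma}$ is valid (identical to what was used in the preceding proof, so I would just cite it), justifying the interchange of the finite sum with the integral (trivial, it is a finite sum), and handling the boundary exponent. For $c=1$ the statement is vacuous; for $0<c<1$ the argument above applies directly with $0<2c<2$, and one does not even need a limiting argument in $\gamma$ since $2c$ is already strictly less than $2$. Hence the mild subtlety about passing to the endpoint $\gamma=2$ that appeared in the previous theorem does not arise here.
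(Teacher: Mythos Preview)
Your proposal is correct and follows essentially the same route as the paper: both use the integral representation $t^{2c}=\frac{1}{I}\int_0^\infty(1-e^{-t^2x^2})x^{-1-2c}\dd x$ together with condition $(4)$ of the preceding theorem to obtain $\sum_{i,j}\eta_i\eta_j d(x_i,x_j)^{2c}\le0$, and then conclude via condition $(3)$. Your write-up is in fact more careful than the paper's, since you explicitly separate the trivial case $c=1$ (where the integral representation would fail because $I$ diverges at $\gamma=2$) from the range $0<c<1$.
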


\begin{proof}
With the same notations, for $\eta \in \1^{\perp}$
$$\sum_{i,j=1}^n \eta_i \eta_j d(x_i,x_j)^{2c}= - \frac{1}{I}   \int_0^{+\infty} \left( \sum_{i,j=1}^n \eta_i \eta_j e^{-d(x_i,x_j)^2x^2} \right) x^{-1-2c} dx \leq 0$$ by the last proposition because $(X,d)$ is Euclidean. Thus by the same proposition, $(X,d^c)$ is Euclidean.
\end{proof}

We have thus an important family of spaces that have the property of being of negative type.

\begin{corollary}\label{eucl}
Euclidean metric spaces are of negative type.
\end{corollary}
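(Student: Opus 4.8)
The plan is to read this off directly from the stability of the Euclidean property under power transforms, combined with the square-root characterisation of negative type. Let $(X,d)$ be a Euclidean metric space with $X = \{x_1,\dots,x_m\}$, and let $D$ be its distance matrix. First I would apply Theorem \ref{power_euclidean} with $c = \frac{1}{2}$: since $(X,d)$ is Euclidean, the space $(X,d^{1/2})$ is again Euclidean (and $d^{1/2}$ is still a genuine metric by the corollary to Lemma \ref{ineq_power}, although this fact is not needed for the conclusion).

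Next I invoke the characterisation of Euclidean finite metric spaces --- concretely Remark \ref{rem} applied to the metric $d^{1/2}$ --- which yields that for every $\eta \in \1^{\perp}$,
\[
\sum_{i,j=1}^{m} \eta_i \eta_j \big(d^{1/2}(x_i,x_j)\big)^2 \leq 0,
\]
i.e.\ $\sum_{i,j=1}^m \eta_i \eta_j d(x_i,x_j) \leq 0$, equivalently $\eta^T D \eta \leq 0$ for all $\eta \in \1^{\perp}$. By Definition \ref{neg_type} this is precisely the statement that $(X,d)$ is of negative type. One may equally phrase the second step as: $(X,d^{1/2})$ being Euclidean is exactly the hypothesis of the theorem asserting that $(X,d^{1/2})$ is Euclidean if and only if $(X,d)$ is of negative type, whence the result is immediate.

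There is essentially no obstacle left at this point: the substantive work --- the Gaussian / inverse Fourier transform computation --- has already been carried out in the proof of Theorem \ref{power_euclidean}, and the equivalence between the Euclidean condition and the sign condition on $\1^{\perp}$ is Remark \ref{rem}. The only subtlety worth flagging is the bookkeeping of exponents: negative type is a condition on the distances themselves, not on their squares, so one must pass through the half-power transform $d^{1/2}$ rather than attempting to apply Remark \ref{rem} to $d$ directly.
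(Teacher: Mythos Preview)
Your proof is correct and follows essentially the same route as the paper: apply Theorem \ref{power_euclidean} with $c=\tfrac{1}{2}$ to conclude that $(X,d^{1/2})$ is Euclidean, and then use the characterisation (either via Remark \ref{rem} or via the equivalence ``$(X,d^{1/2})$ Euclidean $\iff$ $(X,d)$ of negative type'') to obtain the negative-type condition on $d$. Your write-up is in fact more explicit about the exponent bookkeeping than the paper's one-line proof.
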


\begin{proof}
If $(X,d)$ is Euclidean, by last theorem $(X,d^{\frac{1}{2}})$ is also Euclidean so by Remark \ref{corres} it is of negative type.
\end{proof}

\subsection{Proof that spaces with 4 points or less points are of negative type}

The last corollary allows us to conclude for 2 and 3 points metric spaces. Indeed, they are always embedabble in $\R^2$ so they are of negative type. We prove that the $n=4$ case is also true.

\begin{theorem}\label{four}
If $(X,d)$ is a metric space with $|X|=4$ then $(X,d)$ is of negative type.
\end{theorem}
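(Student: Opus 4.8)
The plan is to reduce this to the Gram-matrix criterion. By the equivalence between negative type and Euclideanity of $(X,d^{\frac12})$ established above, it suffices to prove that $(X,d^{\frac12})$ is Euclidean, and by Corollary~\ref{embed} this amounts to showing that the $3\times 3$ symmetric matrix $G:=\mathcal{G}(X,d^{\frac12})$ is positive semi-definite. Since a real symmetric matrix is positive semi-definite if and only if all of its principal minors are nonnegative, the task splits into three: the three diagonal entries of $G$ are $\ge 0$; the three principal $2\times 2$ minors of $G$ are $\ge 0$; and $\det G\ge 0$.

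The first two are immediate with the tools already at hand. By Remark~\ref{corres} the $i$-th diagonal entry of $G$ is the squared $d^{\frac12}$-distance from $x_i$ to $x_4$, that is $d(x_i,x_4)\ge 0$. For the $2\times 2$ minors, the principal submatrix of $G$ on an index set $\{i,j\}$ is exactly $\mathcal{G}(\{x_i,x_j,x_4\},d^{\frac12})$, the Gram matrix of the three-point metric subspace carried by $\{x_i,x_j,x_4\}$ (here one uses that $d^{\frac12}$ is again a metric, by the corollary following Lemma~\ref{ineq_power} with $c=\frac12$). Every three-point metric space embeds isometrically in $\R^2$, so by Corollary~\ref{embed} this Gram matrix is positive semi-definite; in particular the minor in question, being its determinant, is $\ge 0$.

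Everything therefore comes down to $\det G\ge 0$, which is the real content. The geometric picture I would make precise is this: if some three of the four points, say $x_1,x_2,x_3$, span a non-degenerate triangle of $(X,d^{\frac12})$, realize them isometrically as $x_1',x_2',x_3'$ in a plane $\R^2\subset\R^3$ (if no triple spans a genuine triangle, then up to identifications the four points lie on a line, $(X,d^{\frac12})$ is Euclidean, and one concludes via Corollary~\ref{eucl} and the earlier cases). One then tries to place $x_4$ at a point $(p,z)\in\R^2\times\R$ with $\|p-x_i'\|^2+z^2=d(x_i,x_4)$ for $i=1,2,3$. Subtracting these equations pairwise forces $p$ to be the radical center $p_0$ of the three circles of radius $\sqrt{d(x_i,x_4)}$ about $x_i'$; writing $p_0=\alpha_1x_1'+\alpha_2x_2'+\alpha_3x_3'$ in barycentric coordinates ($\alpha_1+\alpha_2+\alpha_3=1$), a short computation yields
\[
z^2=\tfrac12\sum_{i,j=1}^{3}\alpha_i\alpha_j\bigl(d(x_i,x_4)+d(x_j,x_4)-d(x_i,x_j)\bigr),
\]
while $\det G$ equals $4\,(\mathrm{area}\, x_1'x_2'x_3')^2\,z^2$. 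Each factor $d(x_i,x_4)+d(x_j,x_4)-d(x_i,x_j)$ is $\ge 0$ by the triangle inequality in $(X,d)$, so the right-hand side is $\ge 0$ — hence $\det G\ge 0$ — as soon as the $\alpha_i$ all share the same sign, i.e.\ (since $\sum_i\alpha_i=1$) as soon as $p_0$ lies in the closed triangle $x_1'x_2'x_3'$.

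The main obstacle is precisely this location issue. The structural fact that should make it go through is that every triangle of $(X,d^{\frac12})$ is non-obtuse: an obtuse angle at $x_k$ in the triangle $x_ix_jx_k$ would say $d(x_i,x_j)>d(x_i,x_k)+d(x_k,x_j)$, contradicting the triangle inequality for $d$; so the base triangle has all angles $\le 90^{\circ}$ and its circumcenter lies inside it. Moreover $\det G$ does not depend on which of the four points is used as apex in the Gram construction — it equals $36$ times the squared volume of the tetrahedron, a symmetric function of the six distances — so one may choose the apex freely, and the plan is to show that a suitable choice forces $p_0$ into the corresponding closed triangle, using the remaining triangle inequalities $d(x_i,x_j)\le d(x_i,x_4)+d(x_j,x_4)$. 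If one prefers to sidestep this case analysis, $\det G$ is, up to a positive constant, the $5\times 5$ Cayley--Menger determinant in the quantities $d(x_i,x_j)$, and $\det G\ge 0$ can be checked directly as an algebraic inequality holding whenever those quantities are the pairwise distances of a $4$-point metric space — for instance by regarding it as a downward-opening quadratic in one of the $d(x_i,x_j)$ whose two roots occur at the degenerate coplanar configurations. A third, more combinatorial route is to use that on four points the metric cone coincides with the cut cone, so that $d$ is a nonnegative combination of cut pseudometrics, each of which is immediately of negative type.
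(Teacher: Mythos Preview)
Your reduction coincides with the paper's: show $G=\mathcal{G}(X,d^{1/2})$ is positive semi-definite, dispose of the $1\times1$ and $2\times2$ principal minors via the Euclideanity of two- and three-point spaces, and concentrate on $\det G\ge 0$. The divergence is in how this last inequality is obtained, and your main line does not close.

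The gap is the location of $p_0$. Non-obtuseness of the base triangle in $(X,d^{1/2})$ places the \emph{circumcenter} inside it, but $p_0$ is the \emph{radical center} of three circles with in general unequal radii $\sqrt{d(x_i,x_4)}$, and that point is governed by the radii, not by the shape of the triangle alone; non-obtuseness does not force $p_0$ into the closed triangle. Your remedy---permute the apex so that the barycentric coordinates come out nonnegative---is stated as a plan, not established, and it is not evident that the remaining three triangle inequalities $d(x_i,x_j)\le d(x_i,x_4)+d(x_j,x_4)$ suffice to guarantee such a choice exists. Without it the sign argument for $\tfrac12\sum_{i,j}\alpha_i\alpha_j\bigl(d(x_i,x_4)+d(x_j,x_4)-d(x_i,x_j)\bigr)$ collapses. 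The two fallbacks you list (the Cayley--Menger inequality for four points, and the equality of the metric and cut cones for $n\le 4$) are true and would finish the proof, but as written they are citations rather than arguments.

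The paper avoids the direct computation entirely by a continuity trick that recycles exactly the acuteness observation you made. Set $g(t)=\det\mathcal{G}(X,d^{t})$ on $[0,\tfrac12]$. If $(X,d)$ were not of negative type then $g(\tfrac12)<0$; since $g(0)>0$ (at $t=0$ the distance matrix is that of a regular tetrahedron), the intermediate value theorem gives $\beta\in(0,\tfrac12)$ with $g(\beta)=0$. The smaller principal minors being nonnegative as above, $\mathcal{G}(X,d^{\beta})$ is then positive semi-definite of rank at most $2$, so by Theorem~\ref{embed0} the four points embed in $\R^2$, which forces some triangle with an angle $\ge\tfrac{\pi}{2}$. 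But $2\beta<1$, so Lemma~\ref{ineq_power} yields $p^{2\beta}+q^{2\beta}>r^{2\beta}$ for every triple of $d$-distances, i.e.\ every triangle of $(X,d^{\beta})$ is strictly acute---contradiction. Thus the paper trades your unfinished positional analysis for a rank-drop to the planar case, where the obtuse-angle obstruction bites directly.
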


\begin{proof}
If $(X,d)$ is of negative type then $(X,d^{\frac{1}{2}})$ is Euclidean so by Theorem \ref{power_euclidean} for all $0 \leq \alpha \leq \frac{1}{2}, (X,d^{\alpha})$ is Euclidean.

By contradiction, suppose that there exists a metric space $(X,d)$ with $|X|=4$ and $0 \leq \alpha \leq \frac{1}{2}$ such that $(X,d^{\alpha})$ is not Euclidean.

Thus by Corollary \ref{embed} $\mathcal{G}(X,d^{\alpha})$ is not positive. So by contraposition of Sylvester's criterion for positive semi-definite matrices, there exists a principal minor of $\mathcal{G}(X,d^{\alpha})$ that is negative, i.e. , there exists $I \subset \llbracket 1,3 \rrbracket$ such that $\det(\mathcal{G}(X,d^{\alpha})_I) < 0$. 
$\mathcal{G}(X,d^{\alpha})_I = \mathcal{G}( \lbrace p_0, p_i, i \in I \rbrace ,d^{\alpha}) $, however 2 and 3 points metric spaces are Euclidean so necessarily, $I = \llbracket 1,3 \rrbracket$. Finally, $\det(\mathcal{G}(X,d^{\alpha})) < 0$.

Let for $0 \leq t \leq \frac{1}{2}, g(t) = \det \left( \mathcal{G}(X,d^t) \right)$. $g$ is continuous. We have $g(\alpha) < 0$. Also, $g(0) > 0$ as $(X,d^0)$ (with $0^0=0$ here) has the same matrix distance as a tetrahedron. So there exists $\beta \in \left] 0, \alpha \right[$ such that $g(\beta) = 0$.

Thus, $\mathcal{G}(X,d^{\beta})$ is positive semi-definite and of rank less than 2 so $(X,d^{\beta})$ is embedabble in $\R^2$ by Theorem \ref{embed0}. It follows that there exists $x,y,z \in X$ distincts such that $\angle xyz \geq \frac{\pi}{2}$. Let's denote $p,q,r$ the distances between these three points in $(X,d)$. By triangular inequality $p+q > r$ so by {Lemma \ref{ineq_power}} because $2 \beta < 1$ we have $(p^{\beta})^2 + (q^{\beta})^2 > (r^{\beta})^2$. This is true for every permutation between $p, q, r$ so $(\lbrace x,y,z \rbrace, d^{\beta})$ is an acute triangle.

Contradiction so $(X,d)$ is of negative type.

\end{proof}

\subsection{The case of spaces with more than 5 points}

This proof does not hold for $n \geq 5$ as the following example yields a $5$-point metric space that is not of negative type.

\begin{example}
We introduce the following graph with the shortest path distance :
\begin{figure}[h]
    \centering
    \includegraphics[scale=0.1]{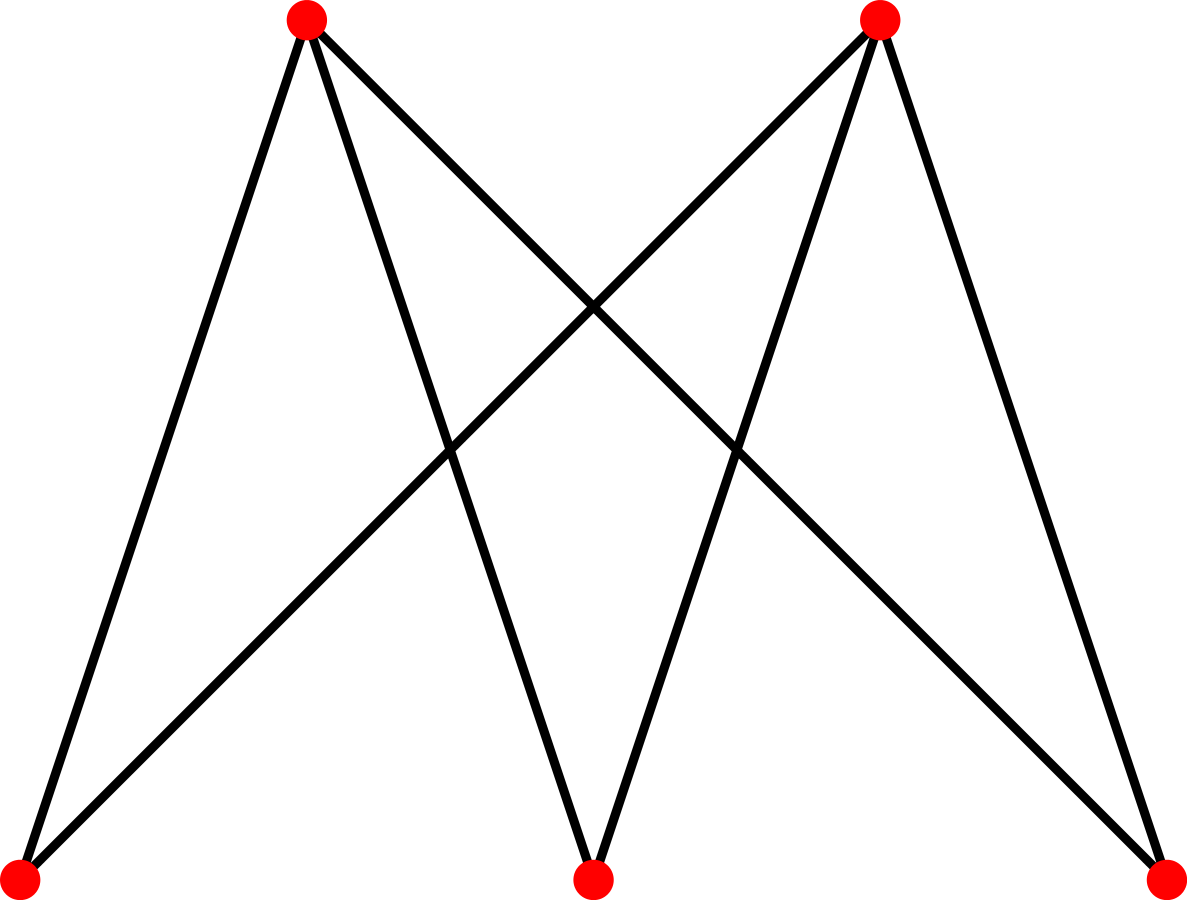}
    \caption{Complete bipartite graph $K_{3,2}$}
    \label{fig:my_label}
\end{figure}

Its distance matrix is :
$$ D = 
\begin{pmatrix}
     0 &2 &2 &1 &1 \\
     2 &0 &2 &1 &1 \\
     2 &2 &0 &1 &1 \\
     1 &1 &1 &0 &2 \\
     1 &1 &1 &2 &0
\end{pmatrix}.
$$

Let $\eta = (1 \quad 1 \quad 1 \quad -1 \quad -2)^T$, we have $\eta \in \1^{\perp}$ and $\eta^TD\eta = 2 >0$.

\end{example}

There exists metric spaces that are not negative type for all $n \geq 5$ according to \cite{hjorth1998finite} Prop. 3.7 :

\begin{proposition}
Let $r>4$, let $M_r$ be the metric space with $r+2$ points with the following distance :
$d(p_i, p_j) = 2 \1_{i \neq j}$ if $i \leq r$ and $j \leq r$, and $d(p_i, p_j) = \1_{i \neq j}$ otherwise. Then $(M_r, d)$ is a metric space that is not of negative type.
\end{proposition}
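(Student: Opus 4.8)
The plan is to disprove negative type directly via Definition \ref{neg_type}, by producing a vector $v\in\1^{\perp}\subseteq\R^{r+2}$ with $v^{T}Dv>0$, where $D$ is the distance matrix of $M_r$. Before doing so one should check that $(M_r,d)$ is indeed a metric space: every pairwise distance lies in $\{0,1,2\}$, so the triangle inequality can only fail when the largest distance in a triple equals $2$, and a distance $2$ occurs only between distinct points $p_i,p_k$ with $i,k\leq r$. For any third point $p_j$ the distances $d(p_i,p_j)$ and $d(p_j,p_k)$ are then either both $2$ (if $j\leq r$ and $j\notin\{i,k\}$), both $1$ (if $j>r$), or one of them is $0$ (if $j\in\{i,k\}$), so their sum is always at least $2$; hence $(M_r,d)$ is a metric space.

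For the main point, the natural test vector is one that is constant on $\{p_1,\dots,p_r\}$ and constant on $\{p_{r+1},p_{r+2}\}$; the constraint $v\in\1^{\perp}$ then pins it down up to scaling, and I would take $v=(2,\dots,2,-r,-r)$ with $r$ leading entries equal to $2$, which satisfies $\sum_i v_i=2r-2r=0$. Expanding $v^{T}Dv=\sum_{i,j}v_iv_jD_{ij}$ and splitting the index pairs according to the two blocks, with $D_{ij}=2\1_{i\neq j}$ on the $r\times r$ block, $D_{ij}=1$ on the two mixed blocks, and $D_{ij}=\1_{i\neq j}$ on the final $2\times2$ block, the three contributions are $8(r^{2}-r)$, $-8r^{2}$ and $2r^{2}$, so
\begin{equation*}
v^{T}Dv = 8(r^{2}-r)-8r^{2}+2r^{2} = 2r^{2}-8r = 2r(r-4)>0
\end{equation*}
since $r>4$. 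Thus $D$ is not conditionally negative semi-definite with respect to $\1^{\perp}$, i.e. $(M_r,d)$ is not of negative type.

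There is no real obstacle here; the whole argument is the choice of $v$ together with the routine block computation above. The only subtlety worth flagging is the hypothesis $r>4$, which is precisely what makes $2r(r-4)$ positive: for $r\leq 4$ this symmetric witness gives $v^{T}Dv\leq 0$, so the small cases — including the $K_{3,2}$ example above, which is anyway not of the form $M_r$ because there the two distinguished points sit at distance $2$ — would require a genuinely non-symmetric vector.
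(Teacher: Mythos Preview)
Your argument is correct: the block computation of $v^{T}Dv$ with $v=(2,\dots,2,-r,-r)\in\1^{\perp}$ is accurate and gives $2r(r-4)>0$ for $r>4$, and the preliminary check of the triangle inequality is sound. The paper does not actually prove this proposition --- it is only quoted from \cite{hjorth1998finite} --- so there is no in-paper proof to compare against; your write-up supplies a self-contained verification that could stand in for the citation.
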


However, there is no known counter example to equality (\ref{problem}) at this point.

\end{document}